\newtheorem{thm}{Theorem}[section]
\newtheorem{lemma}[thm]{Lemma}
\newtheorem{remark}[thm]{Remark}
\newtheorem{proposition}[thm]{Proposition}
\theoremstyle{definition}
\newtheorem{step}{Step}
\newcommand{\lla}[0]{{\langle\!\langle}}
\newcommand{\rra}[0]{{\rangle\!\rangle}}
\newcommand{\la}[0]{{(\!(}}
\newcommand{\ra}[0]{{)\!)}}
\newcommand{\dbar}{\overline{\partial}}
\newcommand{\ddbar}{\partial \overline{\partial}}
\newcommand\ac{{\rm ac}}
\begin{document}

\renewcommand{\subjclassname}{%
\textup{2010} Mathematics Subject Classification}

\title[A general extension theorem for cohomology classes]{A general 
extension theorem for cohomology classes \\ 
on non reduced analytic subspaces\vskip2mm\strut}

\date{\today, version 2}

\subjclass[2010]{Primary 32L10; Secondary 32E05.}

\keywords{Compact K\"ahler manifold, singular hermitian metric,
coherent sheaf cohomology, Dolbeault cohomology,
plurisubharmonic function, $L^2$ estimates, Ohsawa-Takegoshi
extension theorem, multiplier ideal sheaf}

\thanks{This work was supported by the Agence Nationale de la Recherche grant
``Convergence de Gromov-Hausdorff en géométrie k\"ahlérienne'' (ANR-GRACK),
the European Research Council project ``Algebraic and K\"ahler Geometry''
(ERC-ALKAGE, grant No. 670846 from September 2015), the Japan Society
for the Promotion of Science Grant-in-Aid for Young Scientists (B) (JSPS,
grant No. 25800051) and the JSPS Program for Advancing Strategic
International Networks to Accelerate the Circulation
of Talented Researchers.}

\author[Junyan Cao, Jean-Pierre Demailly, Shin-ichi Matsumura]{
Junyan Cao${}^{(1)}$, Jean-Pierre Demailly${}^{(2)}$,
Shin-ichi Matsumura${}^{(3)}$}
\address{\hskip-\parindent(1) Institut de Math\'ematiques de Jussieu,
Universit\'e Pierre et Marie Curie, 4 place Jussieu,
75252~Paris, {\it e-mail}\/:
{\tt junyan.cao@imj-prg.fr}\vskip3pt\hskip-\parindent
(2) Institut Fourier, Universit\'e Grenoble Alpes, 100 rue des Maths,
38610 Gi\`eres, France, {\it e-mail}\/:
{\tt jean-pierre.demailly@univ-grenoble-alpes.fr}\vskip3pt\hskip-\parindent
(3) Mathematical Institute, Tohoku University, 
6-3, Aramaki Aza-Aoba, Aoba-ku, Sendai 980-8578, Japan, {\it e-mail}\/:
{\tt mshinichi@m.tohoku.ac.jp, mshinichi0@gmail.com}}
\maketitle

\vskip-2.4cm
{\hfill\emph{In memory of Professor Lu QiKeng {\rm (1927--2015)}\kern7.2mm\strut}\hfill}
\vskip1.5cm\strut

\begin{abstract}
The main purpose of this paper is to generalize the celebrated $L^2$ extension theorem of Ohsawa-Takegoshi in several directions~: the holomorphic sections to extend are taken in a possibly singular hermitian line bundle, the subvariety from which the extension is performed may be non reduced, the ambient manifold is K\"ahler and holomorphically convex, but not necessarily compact.
\end{abstract}

\section{Introduction and preliminaries}

The purpose of this paper is to generalize the celebrated $L^2$ extension theorem of Ohsawa-Takegoshi \cite{OT87} under the weakest possible hypotheses, along the lines of \cite{Dem15b} and \cite{Mat16b}. Especially, the ambient complex manifold $X$ is a K\"ahler manifold that is only assumed to be \emph{holomorphically convex}, and is not necessarily compact; by the Remmert reduction theorem, this is the same as a K\"ahler manifold $X$ that admits a proper holomorphic map $\pi:X\to S$ onto a Stein complex space $S$. This allows in particular to consider relative situations over a Stein base. We consider a holomorphic line bundle $E\to X$ equipped with a singular hermitian metric $h$, namely a metric which can be expressed locally as $h=e^{-\varphi}$ where $\varphi$ is a \emph{quasi-psh} function, i.e.\ a function that is locally the sum $\varphi=\varphi_0+u$ of a plurisubharmonic function $\varphi_0$ and of 
a smooth function $u$. Such a bundle admits a curvature current
\begin{equation}
\Theta_{E, h}:=i\ddbar\varphi=i\ddbar\varphi_0+i\ddbar u
\end{equation}
which is locally the sum of a positive $(1,1)$-current 
$i\ddbar \varphi_0$ and a smooth
$(1,1)$-form $i\ddbar u$. Our goal is to extend sections that
are defined on a (non necessarily reduced) complex subspace $Y\subset X$,
when the structure sheaf 
\hbox{$\mathcal{O}_Y:=\mathcal{O}_X/\mathcal{I}(e^{-\psi})$} is given by the 
multiplier ideal sheaf of a quasi-psh function $\psi$ 
with \emph{neat analytic singularities}, i.e.\
locally on a neighborhood $V$ of an arbitrary point $x_0\in X$ we have
\begin{equation}
\psi(z)=c\log\sum|g_j(z)|^2+v(z),\qquad g_j\in\mathcal{O}_X(V),~~
v\in C^\infty(V).
\end{equation}
Let us recall that the multiplier ideal sheaf $\mathcal{I}(e^{-\varphi})$ of
a quasi-psh function $\varphi$ is defined by
\begin{equation}
\mathcal{I}(e^{-\varphi})_{x_0}=\big\{f\in\mathcal{O}_{X,x_0}\,;\;\exists U\ni x_0\,,\;
\int_U|f|^2e^{-\varphi}d\lambda<+\infty\big\}
\end{equation}
with respect to the Lebesgue measure $\lambda$ in some local coordinates 
near~$x_0$. As usual, we also denote by $K_X=\Lambda^nT^*_X$ the 
canonical bundle of an $n$-dimensional complex manifold $X$. 
As is well known,
$\mathcal{I}(e^{-\varphi})\subset\mathcal{O}_X$ is a coherent ideal sheaf
(see e.g.\ \cite{Dem-book}). Our main result is given by the following general
statement.

\begin{thm}\label{main-theorem}
Let $E$ be a holomorphic line bundle over a holomorphically
convex K\"ahler mani\-fold~$X$. Let $h$ be a possibly singular hermitian 
metric on $E$, $\psi$~a quasi-psh function with neat analytic singularities
on $X$. Assume that 
there exists a positive continuous function $\delta>0$ on $X$ such that
\begin{equation}\label{main-curv-cond}
\Theta_{E,h}+(1+\alpha\delta)i\ddbar \psi \geq 0\qquad\text{in the sense
of currents, for all}~~ 
\alpha\in[0,1].
\end{equation}
Then the morphism induced by the natural inclusion 
$\mathcal{I}(he^{-\psi}) \to \mathcal{I}(h)$ 
\begin{equation}\label{main-inj}
H^{q}(X, K_X \otimes E \otimes \mathcal{I}(he^{-\psi})) 
\to H^{q}(X, K_X \otimes E \otimes \mathcal{I}(h)) 
\end{equation}
is injective for every $q\geq 0$. 
In other words, the morphism induced by the natural sheaf surjection $\mathcal{I}(h) \to  \mathcal{I}(h)/\mathcal{I}(he^{-\psi})$
\begin{equation}\label{main-surj}
H^{q}(X, K_X \otimes E \otimes \mathcal{I}(h)) \to 
H^{q}(X, K_X \otimes E \otimes \mathcal{I}(h)/\mathcal{I}(he^{-\psi})) 
\end{equation}
is surjective for every $q\geq 0$. 
\end{thm}

\begin{remark}\label{main-consequence} {\rm
If $h$ is smooth, we have $\mathcal{I}(h)=\mathcal{O}_X$ and 
$\mathcal{I}(h)/\mathcal{I}(he^{-\psi})=\mathcal{O}_X/\mathcal{I}(e^{-\psi})
:=\mathcal{O}_Y$ where $Y$ is the zero subvariety of the ideal 
sheaf~$\mathcal{I}(e^{-\psi})$. Then for $q=0$, the surjectivity statement can
be interpreted an extension theorem for holomorphic sections,
with respect to the restriction morphism
\begin{equation}\label{main-degree-zero}
H^{0}(X, K_X \otimes E)\to 
H^{0}(Y, (K_X \otimes E)_{|Y}).
\end{equation}
In general, the quotient sheaf 
$\mathcal{I}(h)/\mathcal{I}(he^{-\psi})$ is supported in
an analytic subvariety $Y\subset X$, which is the zero set of the
quotient ideal
\begin{equation*}
\mathcal{J}_Y:=\mathcal{I}(he^{-\psi}):
\mathcal{I}(h)=\big\{f\in\mathcal{O}_X\,;\;f\cdot\mathcal{I}(h)\subset
\mathcal{I}(he^{-\psi})\big\},
\end{equation*}
and (\ref{main-surj}) can be considered as a restriction morphism to $Y$.\qed}
\end{remark}
\vskip8pt

The crucial idea of the proof is to prove the results (say, in the form of the surjectivity statement), only up to approximation. This is done by solving a $\dbar$-equation
\begin{equation*}
\dbar u_\varepsilon+w_\varepsilon=v
\end{equation*}
where the right hand side $v$ is given and $w_\varepsilon$ is an error term such
that $\Vert w_\varepsilon\Vert=O(\varepsilon^a)$ as $\varepsilon\to 0$,
for some constant $a>0$. A twisted Bochner-Kodaira-Nakano identity introduced by
Donnelly and Fefferman \cite{DF83}, and Ohsawa and Takegoshi \cite{OT87} is used for that purpose, with an additional correction term. The version 
we need can be stated as follows.

\begin{proposition}{\rm (see \cite[Prop.~3.12]{Dem15b})}\label{L2-estimate}
Let $X$ be a complete K\"ahler manifold
equipped with a $($non necessarily complete$)$ K\"ahler metric
$\omega$, and let $(E,h)$ be a Hermitian vector bundle over~$X$.
Assume that there are smooth and bounded functions $\eta,\,\lambda>0$
on $X$ such that the curvature operator
$$
B=B^{n,q}_{E,h,\omega,\eta,\lambda}=
[\eta\,\Theta_{E,h}-i\,\ddbar\eta-i\lambda^{-1}d\partial\eta\wedge\dbar\eta,
\Lambda_\omega]\in C^\infty(X,\mathop{\rm Herm}(\Lambda^{n,q}T^*_X\otimes E))
$$
satisfies $B+\varepsilon I>0$ for some $\varepsilon>0$ $($so that $B$
can be just semi-positive or even slightly negative; here $I$ is the identity 
endomorphism$)$. Given a section
$v\in L^2(X,\Lambda^{n,q}T^*_X\otimes E)$ such that $\dbar v=0$ and
$$M(\varepsilon):=
\int_X\langle (B+\varepsilon I)^{-1}v,v\rangle\,dV_{X,\omega}<+\infty,$$
there exists an approximate solution
\hbox{$f_\varepsilon\in L^2(X,\Lambda^{n,q-1}T^*_X \otimes E)$} and a correction
term $w_\varepsilon\in L^2(X,\Lambda^{n,q}T^*_X \otimes E)$ such that
$\dbar u_\varepsilon=v-w_\varepsilon$ and
$$
\int_X(\eta+\lambda)^{-1}|u_\varepsilon|^2\,dV_{X,\omega}+
\frac{1}{\varepsilon}\int_X|w_\varepsilon|^2\,dV_{X,\omega}\le M(\varepsilon).
$$
Moreover, if $v$ is smooth, then $u_\varepsilon$ and $w_\varepsilon$ can be taken smooth.
\end{proposition}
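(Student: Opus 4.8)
The plan is to reduce the statement to an \emph{a priori} estimate for smooth compactly supported forms --- a twisted Bochner--Kodaira--Nakano inequality, in the spirit of Donnelly--Fefferman and Ohsawa--Takegoshi --- and then to feed it into a Hahn--Banach duality argument in which the curvature operator $B$ is replaced by the strictly positive operator $B+\varepsilon I$; the correction term $w_\varepsilon$ and the factor $1/\varepsilon$ in the conclusion are exactly the price paid for this regularization. First I would establish that for every smooth, compactly supported $E$-valued $(n,q)$-form $u$,
\begin{equation*}
\bigl\|(\eta+\lambda)^{1/2}\dbar^{*}u\bigr\|^{2}+\bigl\|\eta^{1/2}\dbar u\bigr\|^{2}\ \geq\ \lla Bu,u\rra,
\end{equation*}
the norms, scalar products and adjoints being those attached to $(E,h)$ and $\omega$ on $X$. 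This comes from the Bochner--Kodaira--Nakano identity relating the $\dbar$- and $\partial_{h}$-Laplacians: multiplying it by $\eta$ and integrating by parts produces the curvature term $\lla[\eta\,\Theta_{E,h}-i\ddbar\eta,\Lambda_{\omega}]u,u\rra$ together with a cross term coupling $\dbar^{*}u$ with $d\eta$; the latter is bounded below, by the Cauchy--Schwarz inequality, by $-\|\lambda^{1/2}\dbar^{*}u\|^{2}-\lla[i\lambda^{-1}\partial\eta\wedge\dbar\eta,\Lambda_{\omega}]u,u\rra$, and regrouping yields the inequality with exactly the operator $B=B^{n,q}_{E,h,\omega,\eta,\lambda}$. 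Since $X$ admits a complete K\"ahler metric $\widehat\omega$, the argument is run throughout with the complete metrics $\omega_{c}:=\omega+c\,\widehat\omega$, $c>0$: for these, smooth compactly supported forms are dense in $\operatorname{Dom}\dbar\cap\operatorname{Dom}\dbar^{*}$ for the graph norm, so the inequality extends to all $u\in\operatorname{Dom}\dbar\cap\operatorname{Dom}\dbar^{*}$; the case of $\omega$ is recovered at the end from the fact that for forms of bidegree $(n,q)$ the densities $\langle\,\cdot\,,\,\cdot\,\rangle\,dV$ are monotone in the metric, so in particular $M_{c}(\varepsilon)\leq M(\varepsilon)$ for the corresponding quantities.

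For the duality step, fix a complete metric $\omega_{c}$ (I drop the index $c$ from the notation). Adding $\varepsilon\|u\|^{2}$ to both sides of the inequality above makes the right-hand side $\lla(B+\varepsilon I)u,u\rra\geq 0$. For $g\in\ker\dbar\cap\operatorname{Dom}\dbar^{*}$ the term $\|\eta^{1/2}\dbar g\|^{2}$ vanishes; writing $\lla v,g\rra=\lla(B+\varepsilon I)^{-1/2}v,(B+\varepsilon I)^{1/2}g\rra$ and combining Cauchy--Schwarz with the definition of $M(\varepsilon)$ gives
\begin{equation*}
|\lla v,g\rra|^{2}\ \leq\ M(\varepsilon)\,\Bigl(\bigl\|(\eta+\lambda)^{1/2}\dbar^{*}g\bigr\|^{2}+\varepsilon\|g\|^{2}\Bigr),\qquad g\in\ker\dbar\cap\operatorname{Dom}\dbar^{*}.
\end{equation*}
Consequently the linear form sending $\bigl((\eta+\lambda)^{1/2}\dbar^{*}g,\ \varepsilon^{1/2}g\bigr)$ to $\lla g,v\rra$ is well defined (the second slot makes $g\mapsto(\,\cdot\,,\varepsilon^{1/2}g)$ injective) on a subspace of $L^{2}(\Lambda^{n,q-1}T^{*}_{X}\otimes E)\oplus L^{2}(\Lambda^{n,q}T^{*}_{X}\otimes E)$, with norm $\leq M(\varepsilon)^{1/2}$. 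Extending it with the same norm by Hahn--Banach and applying Riesz, one obtains forms $f_{\varepsilon}$ and $k_{\varepsilon}$ with $\|f_{\varepsilon}\|^{2}+\|k_{\varepsilon}\|^{2}\leq M(\varepsilon)$ such that
\begin{equation*}
\lla g,v\rra=\lla\dbar^{*}g,(\eta+\lambda)^{1/2}f_{\varepsilon}\rra+\varepsilon^{1/2}\lla g,k_{\varepsilon}\rra\qquad\text{for all }g\in\ker\dbar\cap\operatorname{Dom}\dbar^{*}.
\end{equation*}

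Now set $u_{\varepsilon}:=(\eta+\lambda)^{1/2}f_{\varepsilon}$ and $w_{\varepsilon}:=\varepsilon^{1/2}k_{\varepsilon}$, so that $\int_{X}(\eta+\lambda)^{-1}|u_{\varepsilon}|^{2}\,dV=\|f_{\varepsilon}\|^{2}$ and $\tfrac{1}{\varepsilon}\int_{X}|w_{\varepsilon}|^{2}\,dV=\|k_{\varepsilon}\|^{2}$, whose sum is $\leq M(\varepsilon)$. Taking the representative of minimal norm places $(f_{\varepsilon},k_{\varepsilon})$ in the closure of the subspace above, which forces $k_{\varepsilon}\in\ker\dbar$, i.e.\ $\dbar w_{\varepsilon}=0$. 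To identify $\dbar u_{\varepsilon}$, I test the last identity against an arbitrary $g\in\operatorname{Dom}\dbar^{*}$ decomposed as $g=g_{1}+g_{2}$ with $g_{1}\in\ker\dbar$ and $g_{2}\perp\ker\dbar$: then $g_{2}\in\ker\dbar^{*}$, so $\dbar^{*}g=\dbar^{*}g_{1}$, and $\lla v,g_{2}\rra=\lla w_{\varepsilon},g_{2}\rra=0$ because $v,w_{\varepsilon}\in\ker\dbar$; this yields $\lla g,v-w_{\varepsilon}\rra=\lla\dbar^{*}g,u_{\varepsilon}\rra$ for every $g\in\operatorname{Dom}\dbar^{*}$, whence $u_{\varepsilon}\in\operatorname{Dom}\dbar$ and $\dbar u_{\varepsilon}=v-w_{\varepsilon}$ since $\dbar$ is closed. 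Finally, letting $c\to 0$ and using $M_{c}(\varepsilon)\leq M(\varepsilon)$ together with the lower semicontinuity of the (weighted) $L^{2}$ norms under weak limits, one extracts $u_{\varepsilon}$ and $w_{\varepsilon}$ satisfying $\dbar u_{\varepsilon}=v-w_{\varepsilon}$ and the stated estimate with respect to $\omega$ and $dV_{X,\omega}$.

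It remains to see that $u_{\varepsilon}$ and $w_{\varepsilon}$ are smooth when $v$ is. For this one uses that $(u_{\varepsilon},w_{\varepsilon})$ minimizes $\int(\eta+\lambda)^{-1}|u|^{2}+\tfrac{1}{\varepsilon}\int|w|^{2}$ subject to $\dbar u+w=v$; the Euler--Lagrange conditions of this strictly convex problem (which, besides $\dbar u+w=v$, also impose that $u_{\varepsilon}$ be orthogonal to $\ker\dbar$ in the relevant weighted inner product) can be combined into a second-order equation for $u_{\varepsilon}$ whose principal part is the complex Laplacian $\dbar\dbar^{*}+\dbar^{*}\dbar$, with smooth coefficients and a right-hand side built from $v$; elliptic regularity then gives $u_{\varepsilon}\in C^{\infty}$, hence $w_{\varepsilon}=v-\dbar u_{\varepsilon}\in C^{\infty}$. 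The one genuinely delicate point in the whole scheme is the very first step: the integration by parts must be carried out with full control of the $d\eta$ cross term, and the density/completeness issues must be handled cleanly through the metrics $\omega_{c}$ and the bidegree-$(n,q)$ monotonicity, so that the \emph{a priori} inequality holds with precisely the operator $B$ of the statement; once that is secured, the duality argument and the elliptic regularity are routine. A complete proof is given in \cite[Prop.~3.12]{Dem15b}.
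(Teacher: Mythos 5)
Your proposal is correct and follows exactly the standard route — the twisted Bochner--Kodaira--Nakano a priori inequality with the Cauchy--Schwarz absorption of the $d\eta$ cross term, the Hahn--Banach/Riesz duality step for $B+\varepsilon I$ yielding the pair $(u_\varepsilon,w_\varepsilon)$, and the reduction to complete metrics $\omega_c=\omega+c\,\widehat\omega$ via the monotonicity of $(n,q)$-densities. The paper itself offers no proof of this proposition (it is quoted verbatim from \cite[Prop.~3.12]{Dem15b}), and your argument is essentially the one given in that reference.
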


In our situation, the main part of the solution, namely $u_\varepsilon$, may very well explode as $\varepsilon\to 0$. In order to show that the equation $\dbar u=v$ can be solved, it is therefore needed to check that the space of coboundaries is closed in the space of cocycles in the Fr\'echet topology under consideration (here, the $L^2_{\rm loc}$ topology), in other words, that the related cohomology group
$H^q(X,\mathcal{F})$ is Hausdorff. In this respect, the fact of considering
$\dbar$-cohomology of smooth forms equipped with the $C^\infty$ topology
on the one hand, or cohomology of forms $u\in L^2_{\rm loc}$ with
$\dbar u\in L^2_{\rm loc}$ on the other hand, yields the same topology on
the resulting  cohomology group $H^q(X,\mathcal{F})$. This comes from the fact
that both complexes yield fine resolutions of the same coherent sheaf
$\mathcal{F}$, and the topology of $H^q(X,\mathcal{F})$ can also be obtained
by using \v{C}ech cochains with respect  to a Stein covering $\mathcal{U}$
of~$X$. The required Hausdorff property then comes from the following well
known fact.

\begin{lemma}\label{hausdorff-lemma}Let $X$ be a holomorphically convex
complex space and $\mathcal{F}$ a coherent analytic sheaf over $X$.
Then all cohomology groups $H^q(X,\mathcal{F})$ are Hausdorff with respect
to their natural topology $($induced by the Fr\'echet topology of local uniform
convergence of holomorphic 
cochains$)$.\footnote{~It was pointed out to us by Prof.\ Takeo Ohsawa that
this result does not hold under the assumption that $X$ is weakly
pseudoconvex, i.e., if we only assume that $X$ admits a smooth psh 
exhaustion. A counter-example can be derived from \cite{Kaz84}. As a 
consequence, it is unclear whether the results
of the present paper extend to the K\"ahler weakly pseudoconvex case, although
the main $L^2$ estimates are still valid in that situation.}
\end{lemma}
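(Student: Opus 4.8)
The plan is to reduce the statement to the case of a Stein base by means of the Remmert reduction, and then to combine Grauert's direct image theorem with Cartan's Theorem~B and the open mapping theorem for Fr\'echet spaces. Since $X$ is holomorphically convex, there is a proper holomorphic map $\pi\colon X\to S$ onto a Stein complex space $S$ with $\pi_*\mathcal{O}_X=\mathcal{O}_S$ (Remmert reduction). By Grauert's direct image theorem the higher direct images $\mathcal{G}^q:=R^q\pi_*\mathcal{F}$ are coherent $\mathcal{O}_S$-modules for every $q\ge0$. Note that the K\"ahler hypothesis plays no role in this lemma: what matters is the properness of $\pi$.

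Since $S$ is Stein, Cartan's Theorem~B gives $H^p(S,\mathcal{G}^q)=0$ for all $p\ge1$, so the Leray spectral sequence $H^p(S,R^q\pi_*\mathcal{F})\Rightarrow H^{p+q}(X,\mathcal{F})$ degenerates at $E_2$ and yields, for every $q$, a natural isomorphism $H^q(X,\mathcal{F})\cong\Gamma(S,\mathcal{G}^q)$. The target is a Fr\'echet space for its topology of local uniform convergence of holomorphic sections: locally $S$ is a closed analytic subset of a polydisc, and using Cartan's Theorems~A and~B one builds over a countable family of relatively compact Stein subsets $U_k\Subset S$ presentations $\mathcal{O}^{\,a_k}\to\mathcal{O}^{\,b_k}\to\mathcal{G}^q\to0$, which exhibit $\Gamma(S,\mathcal{G}^q)$ as a closed subspace of the countable product $\prod_k\mathcal{O}(U_k)^{b_k}$; hence $\Gamma(S,\mathcal{G}^q)$ is Fr\'echet, in particular Hausdorff. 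For $q=0$ this is already the claim, so assume $q\ge1$.

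To transfer the Hausdorff property to $H^q(X,\mathcal{F})$ equipped with its \v{C}ech topology, fix a countable Stein covering $\mathcal{U}$ of $X$; since finite intersections of Stein open sets are Stein, hence $\mathcal{F}$-acyclic by Cartan's Theorem~B, $\mathcal{U}$ is a Leray covering and $\check H^q(\mathcal{U},\mathcal{F})=H^q(X,\mathcal{F})$. The space of cocycles $Z^q=Z^q(\mathcal{U},\mathcal{F})$ is a closed subspace of the Fr\'echet space $C^q(\mathcal{U},\mathcal{F})=\prod\Gamma(U_{i_0\dots i_q},\mathcal{F})$, hence itself Fr\'echet, and $H^q(X,\mathcal{F})=Z^q/B^q$ with $B^q$ the subspace of coboundaries. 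Composing the canonical surjection $Z^q\twoheadrightarrow H^q(X,\mathcal{F})$ (with kernel $B^q$) with the Leray isomorphism, one gets a continuous linear surjection $Z^q\twoheadrightarrow\Gamma(S,\mathcal{G}^q)$ with kernel exactly $B^q$. Since both $Z^q$ and $\Gamma(S,\mathcal{G}^q)$ are Fr\'echet, the open mapping theorem shows this map is open, so $B^q$ is closed in $Z^q$, i.e.\ $H^q(X,\mathcal{F})$ is Hausdorff.

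The delicate point --- which I expect to be the main obstacle --- is the \emph{continuity} of the map $Z^q\to\Gamma(S,\mathcal{G}^q)$, equivalently the fact that the \v{C}ech topology on $H^q(X,\mathcal{F})$ agrees with the Fr\'echet topology transported from $\Gamma(S,R^q\pi_*\mathcal{F})$; the bare degeneration of the Leray spectral sequence only gives an abstract isomorphism of vector spaces. The clean way to secure continuity is to invoke the topological enhancement of Grauert's direct image theorem (Forster--Knorr, Kiehl--Verdier): locally over $S$ there is a bounded complex of \emph{coherent} $\mathcal{O}_S$-modules computing $R^\bullet\pi_*\mathcal{F}$ together with the correct topology on its cohomology sheaves, which renders all the comparison morphisms continuous between complexes of Fr\'echet spaces. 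Consistently with the footnote, this whole scheme collapses for a merely weakly pseudoconvex $X$, since such an $X$ need not be holomorphically convex and no proper map $\pi$ onto a Stein space is then available.
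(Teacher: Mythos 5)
Your proposal follows the same backbone as the paper: Remmert reduction to a proper map $\pi\colon X\to S$ onto a Stein space, Grauert's direct image theorem giving coherence of $R^q\pi_*\mathcal{F}$, and Cartan's Theorem~B plus Leray yielding $H^q(X,\mathcal{F})\cong\Gamma(S,R^q\pi_*\mathcal{F})$. The difference is in how the topological comparison is secured. You correctly flag this as the delicate point and propose invoking the Forster--Knorr / Kiehl--Verdier topological refinement of Grauert's theorem; the paper instead uses a more elementary device: over a relatively compact Stein open $U\Subset S$, both $H^q(\pi^{-1}(U),\mathcal{F})$ and $H^0(U,R^q\pi_*\mathcal{F})$ are finitely generated $\mathcal{O}_S(U)$-modules (Grauert finiteness / coherence), hence each is a Fr\'echet quotient of some $\mathcal{O}_S(U)^{\oplus N}$ obtained from local generators and relations, and the isomorphism between two such finite presentations is automatically a topological isomorphism. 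This finite-presentation argument avoids the heavier machinery you cite while achieving the same goal. One small remark on your final step: once you have a \emph{continuous} linear surjection $Z^q\to\Gamma(S,R^q\pi_*\mathcal{F})$ with kernel $B^q$ and Hausdorff target, $B^q$ is closed simply as the preimage of $\{0\}$; the open mapping theorem is not needed for Hausdorffness of $H^q(X,\mathcal{F})$ (it would additionally show the two topologies on the cohomology agree, which is true but more than the lemma requires). Both routes are sound; yours is somewhat less self-contained but would work.
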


In fact, the Remmert reduction theorem implies that $X$ admits a proper
holomorphic map $\pi:X\to S$ onto a Stein space $S$, and Grauert's
direct image theorem shows that all direct images $R^q\pi_*\mathcal{F}$
are coherent sheaves on~$S$. Now, as $S$ is Stein, Leray's theorem combined
with Cartan's theorem B tells us that we have
an isomorphism $H^q(X,\mathcal{F})\simeq H^0(S,R^q\pi_*\mathcal{F})$. More
generally, if $U\subset S$ is a Stein open subset, we have
\begin{equation}\label{local-over-S}
H^q(\pi^{-1}(U),\mathcal{F})\simeq H^0(U,R^q\pi_*\mathcal{F})
\end{equation}
and when $U\Subset S$ is relatively compact,
it is easily seen that this a topological isomorphism of Fr\'echet spaces
since both sides are $\mathcal{O}_S(U)$ modules of finite type and can be
seen as a Fr\'echet quotient of some direct sum
$\mathcal{O}_S(U)^{\oplus N}$ by looking at local generators and local relations
of~$R^q\pi_*\mathcal{F}$.
Therefore $H^q(X,\mathcal{F})\simeq H^0(S,R^q\pi_*\mathcal{F})$
is a topological isomorphism and the space of sections in the right
hand side is a Fr\'echet space. In particular,
$H^q(X,\mathcal{F})$ is Hausdorff.\qed
\vskip8pt
The isomorphism (\ref{local-over-S}) shows that it is enough to prove
Theorem~\ref{main-theorem} locally over $X$, i.e., we can replace $X$
by $X'=\pi^{-1}(S')\Subset X$ where $S'\Subset S$. Therefore, we can
assume that $\delta>0$ is a constant rather than a continuous
function.

\section{Proof of the extension theorem}

In this section, we give a proof of Theorem~\ref{main-theorem} based on a generalization of the arguments of \cite[Th.~2.14]{Dem15b}. We start by proving the special case of the extension result for holomorphic sections ($q=0$).

\begin{thm}\label{surject-sections}
Let $(X, \omega)$ be a holomorphically convex K\"{a}hler manifold and $\psi$ be a quasi-psh function with neat analytic singularities. 
Let $E$ be a line bundle with a possibly singular metric~$h$, and
$Y$ the support of the sheaf $\mathcal{I} (h)/\mathcal{I}(h e^{-\psi})$,
along with the structure sheaf
\hbox{$\mathcal{O}_Y:=\mathcal{I}(h e^{-\psi}):\mathcal{I} (h)$}.
Assume that there is a continuous function $\delta >0$ such that
\begin{equation*}
i\Theta_{E,h} + (1+\alpha\delta) i\partial \overline{\partial}\psi \geq 0 \qquad\text{in the sense of currents, for all $\alpha\in [0, 1]$}. 
\end{equation*}
Then the restriction morphism 
$$H^0 (X, \mathcal{O}_X (K_X \otimes E)\otimes \mathcal{I} (h)) \rightarrow H^0 (Y, \mathcal{O}_X (K_X \otimes E) \otimes \mathcal{I} (h) /\mathcal{I} (h e^{-\psi})_{|Y})$$
is surjective.
\end{thm}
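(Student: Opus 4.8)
The plan is to reduce the global surjectivity statement to a genuinely local $L^2$ extension problem and then to solve it by the twisted Bochner--Kodaira--Nakano machinery of Proposition~\ref{L2-estimate}, producing only an \emph{approximate} extension whose error goes to zero, and finally invoking Lemma~\ref{hausdorff-lemma} to pass from approximate solutions to an exact one. Concretely, using the isomorphism (\ref{local-over-S}) we may assume $X=\pi^{-1}(S')$ with $S'\Subset S$ Stein, so that $\delta>0$ is a constant and all the cohomology groups involved are Fr\'echet spaces with Hausdorff topology. A section $\overline s$ of $\mathcal{O}_X(K_X\otimes E)\otimes\mathcal{I}(h)/\mathcal{I}(he^{-\psi})$ over $Y$ lifts, after shrinking, to local holomorphic sections on a finite Stein cover $\mathcal{U}=(U_j)$; gluing these with a partition of unity gives a global smooth $(n,0)$-form $\widetilde s$ with values in $E$ such that $\widetilde s\equiv\overline s$ on $Y$ and $\dbar\widetilde s$ has coefficients in $\mathcal{I}(he^{-\psi})$ — i.e.\ $\dbar\widetilde s$ vanishes to the required order along $Y$. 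The problem is thus to correct $\widetilde s$ to a holomorphic section lying in $\mathcal{I}(h)$, by solving $\dbar u=\dbar\widetilde s$ with $u$ having coefficients in $\mathcal{I}(he^{-\psi})$, so that $\widetilde s-u$ is the desired extension.

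The heart of the argument is the choice of weights in Proposition~\ref{L2-estimate}. Following \cite{Dem15b}, I would regularize $\psi$ by a decreasing family $\psi_\varepsilon\downarrow\psi$ with analytic singularities (or by truncations $\max(\psi,-1/\varepsilon)$ suitably smoothed), and work on the complete K\"ahler manifold $X\setminus Z$, $Z$ the singular locus, with a base K\"ahler metric $\omega$ perturbed so as to be complete there. The key device is to run the estimate with the metric $he^{-\psi}$ twisted by the extra plurisubharmonic weight coming from $\psi$ itself: one sets $\eta=\eta(\psi)$ and $\lambda=\lambda(\psi)$ as explicit functions of $\psi$ (of the type $\eta=\varepsilon - \log(\text{something})$, $\lambda$ comparable to $\eta'^2/\eta''$) chosen so that the curvature term $\eta\,\Theta_{E,h\,e^{-\psi}} - i\ddbar\eta - i\lambda^{-1}\partial\eta\wedge\dbar\eta$ is controlled from below by a positive multiple of $i\ddbar\psi$; here the hypothesis $\Theta_{E,h}+(1+\alpha\delta)i\ddbar\psi\ge0$ for all $\alpha\in[0,1]$ is exactly what is needed to absorb the negative contributions of $-i\ddbar\eta$ and of the gradient term, uniformly as $\varepsilon\to0$. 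Because $\dbar\widetilde s$ is supported near $Y$ and its pointwise norm against $(B+\varepsilon I)^{-1}$ is dominated by the singularity of $e^{-\psi}$ in a way that the weight $e^{-\psi}$ itself renders integrable, one gets $M(\varepsilon)<+\infty$ with a bound that is $O(1)$ (or $O(\varepsilon^a)$ after normalizing $\widetilde s$ to vanish to slightly higher order), yielding $u_\varepsilon$ with $\int(\eta+\lambda)^{-1}|u_\varepsilon|^2 e^{-\psi}\,dV<+\infty$ and $\|w_\varepsilon\|^2=O(\varepsilon)$, where $\dbar u_\varepsilon=\dbar\widetilde s-w_\varepsilon$.

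From here the conclusion is soft. The forms $f_\varepsilon:=\widetilde s-u_\varepsilon$ satisfy $\dbar f_\varepsilon=w_\varepsilon\to0$ in $L^2_{\rm loc}$, and modulo $\mathcal{I}(he^{-\psi})$ each $f_\varepsilon$ still represents $\overline s$ on $Y$ (the correction $u_\varepsilon$ has coefficients in $\mathcal{I}(he^{-\psi})$ by the finiteness of its weighted $L^2$ norm). Interpreting $\dbar$-cohomology via the fine resolution of $\mathcal{F}:=\mathcal{O}_X(K_X\otimes E)\otimes\mathcal{I}(h)$ by $L^2_{\rm loc}$ forms, the class of $w_\varepsilon$ in $H^q(X,\mathcal F)$ with $q=1$ tends to $0$; since that group is Hausdorff by Lemma~\ref{hausdorff-lemma}, the space of $\dbar$-coboundaries is closed, so for $\varepsilon$ small enough $w_\varepsilon=\dbar g_\varepsilon$ with $g_\varepsilon$ of controlled norm — in fact one takes a weak limit $u$ of a suitable combination to get an exact solution $\dbar u=\dbar\widetilde s$ with $u$ in $\mathcal{I}(he^{-\psi})$-valued $L^2_{\rm loc}$. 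Then $f:=\widetilde s-u$ is holomorphic, lies in $H^0(X,\mathcal F)$, and restricts to $\overline s$ on $Y$, proving surjectivity. The main obstacle I anticipate is the delicate weight bookkeeping in the second step: one must simultaneously (i) keep $\eta,\lambda$ bounded and $B+\varepsilon I>0$, (ii) make the error $w_\varepsilon$ genuinely small, and (iii) ensure $u_\varepsilon$ lands in the multiplier ideal $\mathcal{I}(he^{-\psi})$ and not merely in $\mathcal{I}(h)$ — balancing these forces the precise form of $\eta(\psi)$ and is where the condition must hold for the whole range $\alpha\in[0,1]$ rather than just at one value.
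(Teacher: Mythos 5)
Your outline matches the paper's high-level strategy — smooth extension via a Stein cover and partition of unity, twisted Bochner--Kodaira--Nakano estimates with weights depending on $\psi$, approximate $\dbar$-solutions with vanishing error, and the Hausdorff/density argument over the Stein base via Remmert reduction. However, there is a genuine gap in what you call the ``heart of the argument.'' You assert that ``$\dbar\widetilde s$ is supported near $Y$''; this is false. Your $\widetilde s$ is a partition-of-unity gluing of local holomorphic extensions, so $\dbar\widetilde s$ is supported on the overlaps of the cover and spreads over all of $X$. The paper resolves this by introducing a \emph{second} parameter: a cutoff $\theta(\psi-t)$ with $\theta=1$ on $\{\psi\le t\}$ and $\theta=0$ on $\{\psi\ge t+1\}$, applying Proposition~\ref{L2-estimate} to $v_t=\dbar\big(\theta(\psi-t)\widetilde f\big)$ rather than to $\dbar\widetilde f$ directly, so that the data is genuinely concentrated on the sublevel set $\{\psi<t+1\}$.

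Even with the cutoff, the decay of the error term is not automatic and your proposed fix (``normalizing $\widetilde s$ to vanish to slightly higher order'') is not available, since the extension must equal $f$ modulo $\mathcal{I}(e^{-\psi})$ and cannot be forced into a strictly smaller ideal. The paper instead exploits the \emph{jumping numbers} of $\psi$: if $m_p\le 1<m_{p+1}$, then $\widetilde f_i-\widetilde f_j\in\mathcal{I}(e^{-\psi})=\mathcal{I}(e^{-m_p\psi})$ actually satisfies the improved bound $\int_X|\dbar\widetilde f|^2e^{-(1+\alpha)\psi}\,dV<\infty$ for every $0<\alpha<m_{p+1}-1$, hence $\int_{\{\psi<t+1\}}|\dbar\widetilde f|^2e^{-\psi}\le Ce^{\alpha t}$, while the cutoff contributes $\int_{\{t<\psi<t+1\}}|\widetilde f|^2 e^{-\psi}\le C'e^{-t}$. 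Coupling $\varepsilon=e^{(1+\alpha)t}$ then gives $\|w_{t,\varepsilon}\|^2=O(\varepsilon^{\alpha/(1+\alpha)})\to 0$ as $t\to-\infty$. This openness of the jumping exponent is precisely where the assumption of neat analytic singularities of $\psi$ enters, and your sketch has no substitute for it. A secondary point: for singular $h$ the paper does \emph{not} regularize $\psi$ (that would destroy the jumping-number structure just used), but reduces to $\psi$ with divisorial singularities and equisingularly regularizes the metric $h$, working on the complete K\"ahler complement of an analytic set.
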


\begin{proof}
{\bf (a)} Let us first assume for simplicity that $h$ is smooth. We will
explain the general case later. Then $\mathcal{I}(h)=\mathcal{O}_X$ and
$\mathcal{I} (h) /\mathcal{I} (h e^{-\psi})=\mathcal{O}_Y=
\mathcal{O}_X/\mathcal{I} (e^{-\psi})$. After possibly shrinking $X$ into
a relatively compact holomorphically convex open subset $X'=\pi^{-1}(S')
\Subset X$, we can suppose that $\delta>0$ is a constant and
that $\psi\leq 0$, after subtracting a large constant to~$\psi$. 
Also, without loss of generality, we can assume that $\psi$ admits 
a discrete sequence of ``jumping numbers''
\begin{equation}\label{jumping-numbers}
0=m_0<m_1<\cdots <m_p<\cdots\quad\hbox{
such that $\mathcal{I}(m\psi)=\mathcal{I}(m_p\psi)$ for
$m\in[m_p,m_{p+1}[$}.
\end{equation}
 Since $\psi$ is assumed to have analytic singularities, 
this follows from using a log resolution of singularities, thanks to the
Hironaka desingularization theorem (by the much deeper result of
\cite{GZ15} on the strong openness conjecture, one could even possibly 
eliminate the assumption that $\psi$ has analytic singularities).
We fix here $p$ such that $m_p\leq 1<m_{p+1}$, and
in the notation of \cite{Dem15b}, we let $Y=Y^{(m_p)}$ be defined by the
non necessarily reduced
structure sheaf $\mathcal{O}_Y=\mathcal{O}_X/\mathcal{I} (e^{-\psi})=
\mathcal{O}_X/\mathcal{I} (e^{-m_p\psi})$.
\vskip8pt

\noindent
\begin{step}[Construction of a smooth extension]\label{Step1}
Take
$$
f \in H^0 (Y, \mathcal{O}_X (K_X \otimes E)_{|Y})=
H^0 (X, \mathcal{O}_X (K_X \otimes E) \otimes \mathcal{O}_X /
\mathcal{I} (e^{-m_p\psi})).
$$
Let $\mathcal{U}=(U_i)$ be a Stein covering of $X$ and let $(\rho_i)$
be a partition of unity subordinate to $(U_i)$. 
Thanks to the exact sequence 
\begin{equation}\label{exact-sequence}
0\rightarrow \mathcal{I} (e^{-\psi}) \rightarrow \mathcal{O}_X \rightarrow \mathcal{O}_X /\mathcal{I} (e^{-\psi})\rightarrow 0 , 
\end{equation}
we can find a $\widetilde{f}_i \in H^0 (U_i, \mathcal{O}_X (K_X \otimes E))$ such that
\begin{equation*}
\widetilde{f}_i |_{Y \cap U_i} = f |_{Y \cap U_i} .
\end{equation*}
Then \eqref{exact-sequence} implies that
\begin{equation}\label{f-tilde-differences}
\widetilde{f}_i -\widetilde{f}_j \in H^0 (U_i \cap U_j , \mathcal{O}_X (K_X \otimes E) \otimes \mathcal{I} (e^{-\psi})) .
\end{equation}
As a consequence, the smooth section $\widetilde{f} := \sum\limits_i
\rho_i \cdot \widetilde{f}_i$ is a smooth extension of $f$ and satisfies
$\dbar\widetilde{f} = \sum\limits_i (\dbar\rho_i)\cdot (\widetilde{f}_i-
\widetilde{f}_j)$ on $U_j$, hence
\begin{equation}\label{dbar-f-bound}
\int_X |\dbar\widetilde{f} |_{\omega, h} ^2 e^{-\psi} dV_{X,\omega} =
\int_X \sum_j\rho_j\Big|\sum_i (\overline{\partial}\rho_i) \cdot 
(\widetilde{f}_i - \widetilde{f}_j)\Big|_{\omega, h} ^2 e^{-\psi}
dV_{X, \omega} < +\infty . 
\end{equation}
\vskip2pt
\end{step}

\begin{step}[$L^2$-estimates]\label{Step2}
We follow here the arguments of \cite[proof of th.~2.14, p.~217]{Dem15b}. Let $t\in \mathbb{Z}^-$ and let $\chi_t$ be the negative convex increasing function defined in \cite[(5.8$*$), p.~211]{Dem15b}. Put $\eta_t := 1 -\delta \cdot \chi_t(\psi)$
and $\lambda_t := 2 \delta \frac{(\chi_t^2(\psi))^2}{\chi_t''(\psi)}$. We set
\begin{eqnarray*}
R_t &:=& \eta_t (\Theta_{E,h} +i\ddbar\psi ) -i\ddbar \eta_t -\lambda_t ^{-1} i \partial \eta_t \wedge \dbar \eta_t\\
&\kern3pt =& \eta_t (\Theta_{E,h} + (1+\delta \eta_t ^{-1} \chi_t'(\psi))i\ddbar\psi ) 
+ \frac{\delta\cdot \chi_t''(\psi)}{2}  i \partial\psi\wedge \dbar\psi.
\end{eqnarray*}
Note that $\chi_t''(\psi) \geq \frac{1}{8}$ on $W_t =\{t < \psi < t+1\}$. The curvature assumption \eqref{main-curv-cond} implies
$$\Theta_{E,h} + (1+\delta \eta_t ^{-1} \chi_t'(\psi))\,i\ddbar\psi \geq 0 \qquad\text{on }X .$$
As in \cite{Dem15b}, we find
\begin{equation}\label{posit-curv-bound1}
R_t \geq 0 \qquad\text{on~~}X
\end{equation}
and
\begin{equation}\label{posit-curv-bound2}
R_t \geq \frac{\delta}{16} i \partial\psi\wedge \dbar\psi\qquad\text{on~~}W_t =\{t < \psi < t+1\}. 
\end{equation}
\vskip8pt

\noindent
Let $\theta : [ -\infty , +\infty [{}\rightarrow [0,1]$ be a smooth non increasing real function satisfying
$\theta (x)=1$ for $x \leq 0$, $\theta (x)=0$ for $x \geq 1$ and $|\theta' | \leq 2$.
By applying the $L^2$ estimate (Proposition \ref{L2-estimate}),
for every $\varepsilon >0$ we can find  sections 
$u_{t, \varepsilon}$, $w_{t, \varepsilon}$ satisfying 
\begin{equation}\label{l2estimate}
\dbar u_{t, \varepsilon} + w_{t, \varepsilon}=v_t:=
\dbar\big(\theta(\psi -t) \cdot \widetilde{f}\;\big)
\end{equation}
and
\begin{equation}\label{main-l2-estimate}
\int_X (\eta_t+\lambda_t)^{-1}| u_{t, \varepsilon}|_{\omega, h} ^2 e^{-\psi} dV_{X,\omega}+
\frac{1}{\varepsilon} \int_X |w_{t, \varepsilon}|_{\omega, h}^2 e^{-\psi} 
dV_{X,\omega}\leq 
\int_X \langle (R_t +\varepsilon I)^{-1}v_t,v_t\rangle e^{-\psi}dV_{X,\omega}\;,
\end{equation}
where
\begin{equation}\label{vt-terms}
v_t=\dbar\big(\theta(\psi -t) \widetilde{f}\;\big)=
\theta'(\psi -t)\,\dbar\psi\wedge\widetilde{f} +
\theta(\psi -t)\,\dbar\widetilde{f}.
\end{equation}
Combining \eqref{posit-curv-bound1}, \eqref{posit-curv-bound2},
\eqref{main-l2-estimate} and \eqref{vt-terms}, we get
$\int_X | u_{t, \varepsilon}|_{\omega,h}^2 e^{-\psi} dV_{X,\omega}< +\infty$ and
\begin{equation}\label{error-term}
 \int_X |w_{t, \varepsilon}|_{\omega, h} ^2 e^{-\psi} d V_{X, \omega} \leq 
\frac{128\,\varepsilon}{\delta} \int_{\{t < \psi < t+1\}} |\widetilde{f}\;|_{\omega,h} ^2 e^{-\psi} 
dV_{X,\omega}+ 2 \int_{\{\psi < t+1\}}
|\dbar\widetilde{f}\;|_{\omega, h} ^2 e^{-\psi} d V_{X, \omega}. 
\end{equation}
\vskip8pt

\noindent
We now estimate the right hand side of \eqref{error-term}.
Since $\widetilde{f}$ is smooth, we have an obvious upper bound of the 
first term
\begin{equation}\label{second-term}
 \int_{\{t < \psi < t+1\}} |\widetilde{f}|_{\omega,h} ^2 e^{-\psi} d V_{X,\omega}\leq C_1 e^{-t} ,
\end{equation}
where $C_1$ is the $C^0$ norm of $\widetilde{f}$.
For the second term,
thanks to (\ref{jumping-numbers}), (\ref{f-tilde-differences}) and
(\ref{dbar-f-bound}), we have
\begin{equation}\label{dbar-f-bound-stronger}
\int_X |\dbar\widetilde{f} |_{\omega, h} ^2 e^{-(1+\alpha)\psi}  d V_{X, \omega}< +\infty
\end{equation}
for any $\alpha\in{}]0,m_{p+1}-1[$. As a consequence, we get
\begin{equation}\label{first-term}
 \int_{\{\psi < t+1\}} |\dbar\widetilde{f} |_{\omega, h}^2 e^{-\psi} d V_{X, \omega} \leq C_2  e^{\alpha t}
\end{equation}
for some constant $C_2$ depending only on $\alpha$.
By taking $\varepsilon =e^{(1+\alpha)t}$, \eqref{error-term}, \eqref{second-term} and \eqref{first-term}  imply
\begin{equation}\label{addined}
 \int_X |w_{t, \varepsilon}|_{\omega, h}^2 e^{-\psi} d V_{X, \omega}\leq C_3 e^{\alpha t} =O(\varepsilon^{\frac{\alpha}{1+\alpha}}), 
\end{equation}
for some constant $C_3$, whence the error tends to $0$ as $t\to-\infty$ and
$\varepsilon\to 0$.
\vskip8pt
\end{step}

\begin{step}[Final conclusion]\label{Step3}
Putting everything together and redefining
$u_t=u_{t,\varepsilon}$, $w_t=w_{t,\varepsilon}$ for simplicity of notation,
we get
\begin{equation}\label{approximationsolution}
  \dbar(\theta(\psi -t) \cdot \widetilde{f}  -u_t) =
  w_t ,\qquad  \int_X |u_t|_h ^2 e^{-\psi}
  dV_{X,\omega}< +\infty  
\end{equation}
and
\begin{equation}\label{l2-decay}
  \lim\limits_{t \rightarrow -\infty} \int_X |w_t|_{\omega, h}^2
  e^{-\psi} dV_{X,\omega}=0 . 
\end{equation}
After shrinking $X$, we can assume that we have a finite Stein covering $\mathcal{U}=(U_i)$ where the $U_i$ are biholomorphic to bounded pseudoconvex domains. The standard H\"ormander
$L^2$ estimates then provide $L^2$ sections $s_{t,j}$ on $U_j$
such that $\dbar s_{t,j}=w_t$ on $U_j$ and
\begin{equation}\label{l2-decay2}
  \lim\limits_{t \rightarrow -\infty} \int_{U_j}|s_{t,j}|_{\omega, h}^2
  e^{-\psi} dV_{X,\omega}=0 . 
\end{equation}
Then
\begin{eqnarray}
\dbar\Big(\theta(\psi -t) \cdot \widetilde{f}  -u_t
-\sum_j\rho_js_{t,j}\Big)
&=&-\sum_j (\dbar\rho_j)\cdot s_{t,j}\quad\hbox{on $X$}\nonumber\\
\label{l2-decay3}&=&-\sum_j (\dbar\rho_j)\cdot (s_{t,j}-s_{t,i})\quad
\hbox{on $U_i$}.
\end{eqnarray}
As $\dbar(s_{t,j}-s_{t,i})=0$ on $U_i\cap U_j$, the difference is holomorphic
and the right hand side of \eqref{l2-decay3} is smooth. Moreover, \eqref{l2-decay2}
shows that these differences converge uniformly to $0$, hence
the right hand side of \eqref{l2-decay3} 
converges to $0$ in $C^\infty$ topology. The left hand side implies that
this is a coboundary in the $C^\infty$ Dolbeault resolution of
$\mathcal{O}_X(K_X\otimes E)$. By applying
Lemma~\ref{hausdorff-lemma}, we conclude
that there is a $C^\infty$ section $\sigma_t$ of $K_X\otimes E$ converging
uniformly to $0$ on compact subsets of~$X$ as $t\to-\infty$, such that
$\dbar\sigma_t=\sum\limits_j (\dbar\rho_j)\cdot s_{t,j}$ on $X$. This implies that
$$
\widetilde{f}_t:=\theta(\psi -t) \cdot \widetilde{f}  -u_t
-\sum_j\rho_js_{t,j}+\sigma_t
$$
is holomorphic on $X$. H\"ormander's $L^2$ estimates also produce
local smooth solutions $\sigma_{t,i}$ on $U_i$ with the
additional property that
$\lim\limits_{t\to-\infty}\int_{U_i}|\sigma_{t,i}|_{\omega,h} ^2 e^{-\psi}dV_{X,\omega}=0$.
Therefore
$$
\widetilde{f}_{t,i}:=\theta(\psi -t) \cdot \widetilde{f}  -u_t
-\sum_j\rho_js_{t,j}+\sigma_{t,i}
$$
is holomorphic on $U_i$ and $\widetilde{f}_t-\widetilde{f}_{t,i}$
converges uniformly to $0$ on compact subsets of $U_i$. However, by
construction, $\widetilde{f}_{t,i}-\widetilde{f}_i$ is a holomorphic
section on $U_i$ that satisfies the $L^2$ estimate with respect to
the weight $e^{-\psi}$, hence $\widetilde{f}_{t,i}-\widetilde{f}_i$
is a section of $\mathcal{O}_X(K_X\otimes E)\otimes\mathcal{I}(e^{-\psi})$
on $U_i$, in other words the image of $\widetilde{f}_{t,i}$ in
$$
H^0(U_i,\mathcal{O}_X(K_X\otimes E)\otimes
\mathcal{O}_X/\mathcal{I}(e^{-\psi}))
$$
coincides with $f_{|U_i}$. As a consequence, the image of $\widetilde{f}_t$
in
$$
H^0(X,\mathcal{O}_X(K_X\otimes E)\otimes
\mathcal{O}_X/\mathcal{I}(e^{-\psi}))=
H^0(Y,(K_X\otimes E)_{|Y})
$$
converges to~$f$. By the direct image argument used in the preliminary section, this density property implies the surjectivity of the restriction morphism to~$Y$.\vskip8pt
\end{step}

\noindent
{\bf(b)} We now prove the theorem for the general case when $h=e^{-\varphi}$ is not necessarily smooth. We can reduce ourselves to the case when $\psi$ has divisorial singularities (see \cite{Dem15b} or the next section for a more detailed argument). Let us pick a section
$$
f\in  H^0 (X, \mathcal{O}_X (K_X \otimes E) \otimes \mathcal{I} (h) /\mathcal{I} (h e^{- \psi})).
$$
By using the same reasoning as in Step \ref{Step1}, we can find a
smooth extension $\widetilde{f}\in \mathcal{C}^\infty (X, K_X \otimes E)$ of $f$ such that
\begin{equation}\label{dbar-f-bound-new}
\qquad \int_X |\dbar\widetilde{f}|_{\omega, h} ^2 e^{-\psi} dV_{X,\omega} < +\infty . 
\end{equation}
For every $t\in \mathbb{Z}^{-}$ fixed, as 
$\psi$ has divisorial singularities, we still have
$$\Theta_{E,h}+ (1+\delta \eta_t ^{-1} \chi_t'(\psi)) 
(i\ddbar\psi)_{\ac} \geq 0 \qquad\text{on }X ,$$
where $(i\ddbar\psi)_{\ac}$ is the absolutely continuous part of
$i\ddbar\psi$.  The regularization techniques of \cite{DPS01} and
\cite[Th.~1.7, Remark~1.11]{Dem15a} (cf.\ also the next section)
produce a family of singular
metrics $\{h_{t,\varepsilon}\}_{k=1}^{+\infty}$ which are smooth in
the complement $X\smallsetminus Z_{t,\varepsilon}$ of an analytic set,
such that $\mathcal{I} (h_{t,\varepsilon}) =\mathcal{I} (h)$,
$\mathcal{I} (h_{t,\varepsilon}e^{-\psi}) =\mathcal{I} (h e^{-\psi})$ and
$$
\Theta_{E,h_{t,\varepsilon}}+
(1+\delta \eta_t ^{-1} \chi_t'(\psi))\,i\ddbar\psi \geq
-\frac{1}{2}\varepsilon \omega \qquad\text{on }X .
$$
The additional error term $-\frac{1}{2}\varepsilon \omega$ is
irrelevant when we use Proposition \ref{L2-estimate}, as it is absorbed by
taking the hermitian operator $B+\varepsilon I$. Therefore for every
$t\in \mathbb{Z}^-$, with the adjustment $\varepsilon=e^{\alpha t}$,
$\alpha\in{}]0,m_{k+1}-1[$, we can find a singular metric
$h_t=h_{t,\varepsilon}$ which is smooth in the complement $X\setminus Z_t$
of an analytic set, such that
$\mathcal{I} (h_t) =\mathcal{I} (h)$,
$\mathcal{I} (h_te^{-\psi}) =\mathcal{I} (he^{-\psi})$
and $h_t\uparrow h$ as
$t\rightarrow -\infty$, and approximate solutions of the
$\dbar$-equation such that
$$\dbar(\theta(\psi -t) \cdot \widetilde{f}  -u_t) =w_t \text{ }, \qquad \int_X |u_t|_{\omega,h_t} ^2 e^{-\psi} dV_{X, \omega} < +\infty$$
and
$$ \lim_{t\rightarrow -\infty} \int_X |w_t|_{\omega, h_t} ^2 e^{-\psi} d V_{X,\omega} =0 .$$
Proposition \ref{L2-estimate} can indeed be applied since $X\smallsetminus Z_t$ is complete K\"ahler (at least after we shrink $X$ a little bit as $X'=\pi^{-1}(S')$,
cf.~\cite{Dem82}).
The theorem is then proved by using the same argument as in Step \ref{Step3}; it is enough to notice that the holomorphic sections $s_{t,j}-s_{t,i}$ and
$\widetilde{f}_{t,i}-\widetilde{f}_{i}$ satisfy the $L^2$-estimate with respect
to $(h_t,\psi)$ [instead of the expected $(h,\psi)$], but the multiplier ideal
sheaves involved are unchanged. The Hausdorff property is applied to
the cohomology group $H^1(X,\mathcal{O}_X(K_X\otimes E)\otimes\mathcal{I}(h))$
instead of $H^1(X,K_X\otimes E)$, and the density property to the morphism 
of direct image sheaves
$$\pi_*\big(\mathcal{O}_X(K_X\otimes E)\otimes\mathcal{I}(h)\big)\to
\pi_*\big(\mathcal{O}_X(K_X\otimes E)\otimes\mathcal{I}(h)/
\mathcal{I}(he^{-\psi})\big)
$$
over the Stein space~$S$.
\end{proof}
\vskip8pt

\noindent
{\bf Proof of the extension theorem for degree \textit{\textbf q} cohomology classes.}
The reasoning is extremely similar, so we only explain the few additional arguments needed. In fact, Proposition \ref{L2-estimate} can be applied right away to arbitrary $(n,q)$-forms with $q\geq 1$, and the twisted Bochner-Kodaira-Nakano inequality yields exactly the same estimates. Any cohomology class in
$$
H^q(Y,\mathcal{O}_X(K_X\otimes E)\otimes\mathcal{I}(h)/\mathcal{I}(he^{-\psi}))
$$
is represented by a holomorphic \v{C}ech $q$-cocycle with respect to
the Stein covering $\mathcal{U}=(U_i)$, say
$$
(c_{i_0\ldots i_q}),\qquad
c_{i_0\ldots i_q}\in H^0\big(U_{i_0}\cap\ldots\cap U_{i_q},
\mathcal{O}_X(K_X\otimes E)\otimes\mathcal{I}(h)/\mathcal{I}(he^{-\psi})\big).
$$
By the standard sheaf theoretic isomorphisms with Dolbeault cohomology (cf.\
e.g.\ \cite{Dem-e-book}), this 
class is represented by a smooth $(n,q)$-form
$$
f=\sum_{i_0,\ldots,i_q}c_{i_0\ldots i_q}\rho_{i_0}
\dbar\rho_{i_1}\wedge\ldots\dbar\rho_{i_q}
$$
by means of a partition of unity $(\rho_i)$ subordinate to $(U_i)$. This form is
to be interpreted as a form on the (non reduced) analytic subvariety $Y$ associated with the ideal sheaf $\mathcal{J}=\mathcal{I}(he^{-\psi}):
\mathcal{I}(h)$ and the structure sheaf $\mathcal{O}_Y=
\mathcal{O}_X/\mathcal{J}$. We get an extension as a smooth (no longer
$\dbar$-closed) $(n,q)$-form on $X$ by taking
$$
\widetilde{f}=\sum_{i_0,\ldots,i_q}\widetilde{c}_{i_0\ldots i_q}\rho_{i_0}
\dbar\rho_{i_1}\wedge\ldots\dbar\rho_{i_q}
$$
where $\widetilde{c}_{i_0\ldots i_q}$ is an extension of
$c_{i_0\ldots i_q}$ from $U_{i_0}\cap\ldots\cap U_{i_q}\cap Y$ to
$U_{i_0}\cap\ldots\cap U_{i_q}$. Again, we can find approximate 
$L^2$ solutions of the $\dbar$-equation such that
$$\dbar(\theta(\psi -t) \cdot \widetilde{f}  -u_t) =w_t \text{ }, \qquad \int_X |u_t|_{\omega,h_t} ^2 e^{-\psi} dV_{X, \omega} < +\infty$$
and
$$ \lim_{t\rightarrow -\infty} \int_X |w_t|_{\omega, h_t} ^2 e^{-\psi} d V_{X,\omega} =0 .$$
The difficulty is that $L^2$ sections cannot be restricted in a continuous
way to a subvariety. In order to overcome this problem,
we play again the game of returning to \v{C}ech cohomology by solving 
inductively $\dbar$-equations for $w_t$ on $U_{i_0}\cap\ldots\cap U_{i_k}$,
until we reach an equality
\begin{equation}\label{l2-estimate-nq-forms}
\dbar\big(\theta(\psi -t) \cdot \widetilde{f}  -\widetilde{u}_t\big)
=\widetilde{w}_t:=
-\sum_{i_0,\ldots,i_{q-1}}s_{t,i_0\ldots i_q}\dbar\rho_{i_0}\wedge
\dbar\rho_{i_1}\wedge\ldots\dbar\rho_{i_q}
\end{equation}
with holomorphic sections 
$s_{t,I}=s_{t,i_0\ldots i_q}$ on $U_I=U_{i_0}\cap\ldots\cap U_{i_q}$,
such that
$$ \lim_{t\rightarrow -\infty} \int_{U_I} |s_{t,I}|_{\omega, h_t} ^2 
e^{-\psi} d V_{X,\omega} =0.$$
Then the right hand side of (\ref{l2-estimate-nq-forms}) is smooth, and more
precisely has coefficients in the sheaf
$\mathcal{C}^\infty\otimes_{\mathcal{O}}\mathcal{I}(he^{-\psi})$, and 
$\widetilde{w}_t\to 0$ in $C^\infty$ topology. A~priori,
$\widetilde{u}_t$ is an $L^2$ $(n,q)$-form equal to $u_t$ plus a combination
$\sum\rho_is_{t,i}$ of the local solutions of $\dbar s_{t,i}=w_t$, plus
$\sum\rho_is_{t,i,j}\wedge\dbar\rho_j$ where
$\dbar s_{t,i,j}=s_{t,j}-s_{t,i}$, plus etc~$\ldots$~, and is such that
$$\int_{X} |\widetilde{u}_t|_{\omega, h_t}^2 
e^{-\psi} d V_{X,\omega} <+\infty.$$
Since $H^q(X,\mathcal{O}_X(K_X\otimes E)\otimes\mathcal{I}(he^{-\psi}))$ can be 
computed with the $L^2_{\rm loc}$ resolution of the coherent sheaf, or 
alternatively  with the $\dbar$-complex of $(n,{\scriptstyle\bullet})$-forms 
with coefficients in
$\mathcal{C}^\infty\otimes_{\mathcal{O}}\mathcal{I}(he^{-\psi})$, we may assume that
\hbox{$\widetilde{u}_t\in\mathcal{C}^\infty\otimes_{\mathcal{O}}
\mathcal{I}(he^{-\psi})$},
after playing again with \v{C}ech cohomology. Lemma~\ref{hausdorff-lemma}
yields a sequence of smooth $(n,q)$-forms $\sigma_t$ with coefficients in 
$\mathcal{C}^\infty\otimes_{\mathcal{O}}\mathcal{I}(h)$,
such that $\dbar\sigma_t=\widetilde{w}_t$ and $\sigma_t\to 0$ in
$C^\infty$-topology. Then $\widetilde{f}_t=
\theta(\psi -t) \cdot \widetilde{f}  -\widetilde{u}_t-\sigma_t$
is a $\dbar$-closed $(n,q)$-form on $X$ with values in
$\mathcal{C}^\infty\otimes_{\mathcal{O}}\mathcal{I}(h)\otimes
\mathcal{O}_X(E)$, whose image in
$H^q(X,\mathcal{O}_X(K_X\otimes E)\otimes\mathcal{I}(h)/\mathcal{I}(he^{-\psi}))$
converges to $\{f\}$ in $C^\infty$ Fr\'echet topology. We conclude by a
density argument on the Stein space $S$, by looking at the coherent 
sheaf morphism
$$R^q\pi_*\big(\mathcal{O}_X(K_X\otimes E)\otimes\mathcal{I}(h)\big)\to
R^q\pi_*\big(\mathcal{O}_X(K_X\otimes E)\otimes\mathcal{I}(h)/
\mathcal{I}(he^{-\psi})\big).\eqno\qed
$$

\section{An alternative proof based on injectivity theorems}\label{Sec-3}

We give here an alternative proof based on injectivity theorems, in the case when $X$ is compact K\"ahler. The case of a holomorphically convex manifold is entirely similar, so we will content ourselves to indicate the required additional arguments at the end.

\begin{proof}[Proof of Theorem \ref{main-theorem}.]
First of all, we reduce the proof of Theorem \ref{main-theorem} 
to the case when $\psi$ has divisorial singularities. 
Since $\psi$ has analytic singularities, 
there exists a modification $\pi \colon X'\to X$ such that the pull-back 
$\pi^{*}\psi$ has divisorial singularities. 
For the singular hermitian line bundle $(E', h'):=(\pi^{*}E, \pi^{*}h)$ 
and the quasi-psh function $\psi':=\pi^{*}\psi$, 
we can easily check that \begin{align*}
\pi_{*}(K_{X'} \otimes E' \otimes \mathcal{I}( h'e^{-\psi'})) &= 
K_{X} \otimes E \otimes \mathcal{I}( he^{-\psi}), \\ 
\pi_{*}(K_{X'} \otimes E' \otimes \mathcal{I}( h')) &= 
K_{X} \otimes E \otimes \mathcal{I}(h). 
\end{align*}
Hence we obtain the following commutative diagram\,$:$ 
\[\xymatrix{
H^{q}(X,K_{X} \otimes E \otimes \mathcal{I}( he^{-\psi})) \ar[d]_{\cong}^{\pi^*} \ar[r]^f \ar@{}[dr]|\circlearrowleft & H^{q}(X,K_{X} \otimes E \otimes \mathcal{I}( h)) \ar[d]^{\pi^*} \\
H^{q}(X',K_{X'} \otimes E' \otimes \mathcal{I}( h'e^{-\psi'})) \ar[r]^g & 
H^{q}(X',K_{X'} \otimes E' \otimes \mathcal{I}( h')), 
}\]
where $f$, $g$ are the morphisms induced by the natural inclusions 
and $\pi^{*}$ is the natural edge morphism. 
It follows that the left edge morphism $\pi^{*}$ is an isomorphism 
since the curvature of the singular hermitian metric $h'e^{-\psi'}$ on $E'$ 
is semi-positive by the assumption. 
Indeed, even if $h'$ does not have analytic singularities, 
we can see that  
$$
R^q\pi_{*}(K_{X'} \otimes E' \otimes \mathcal{I}( h'e^{-\psi'})) =0 \text{ for every }q>0 
$$
by \cite[Corollary 1.5]{Mat16b}. 
(In the case of $X$ being a projective variety, 
a relatively easy proof can be found in \cite{FM16}.)  
If Theorem \ref{main-theorem} can be proven when $\psi$ has divisorial singularities, 
it follows that the morphism $g$ in the above diagram is injective 
since $(E', h')=(\pi^{*}E, \pi^{*}h)$ and $\psi'=\pi^{*}\psi$ satisfy 
the assumptions in Theorem \ref{main-theorem} and $\psi'$ has divisorial singularities. 
Therefore the morphism $f$ is also injective by the commutative diagram.  

\vspace{0.1cm}
\par

Now we explain the idea of the proof of Theorem \ref{main-theorem}. 
If we can obtain equisingular approximations $h_{\varepsilon}$ of $h$ 
satisfying the following properties\,: 
\begin{align*}
\Theta_{E,h_\varepsilon}+ i\ddbar \psi \geq -\varepsilon\omega~~~\text{and}~~~
\Theta_{E,h_\varepsilon}+(1+\delta)  i\ddbar \psi \geq -\varepsilon\omega, 
\end{align*}
then a proof similar to \cite{FM16} works,  
where $\omega$ is a fixed K\"ahler form on $X$. 
In the case when $\psi$ has divisorial singularities, 
we can attain either of the above curvature properties, 
but we do not know whether we can attain them at the same time. 
For this reason, 
we will look for an essential curvature condition 
arising from the assumptions on the curvatures in Theorem \ref{main-theorem}, 
in order to use the \lq \lq twisted" Bochner-Kodaira-Nakano identity.

\vspace{0.1cm}
\par
From now on, we consider a quasi-psh $\psi$ with divisorial singularities. 
Then there exist an effective $\mathbb{R}$-divisor $D$
and a smooth $(1,1)$-form $\gamma$ on $X$ 
such that 
\begin{align*}
\frac{i}{2\pi} \ddbar \psi = [D] + \frac{1}{2\pi}\gamma
\end{align*}
in the sense of $(1,1)$-currents, 
where $[D]$ denotes the current of the integration over $D$. 
For the irreducible decomposition $D=\sum_{i=1}^{N}a_{i}D_{i}$ and 
the defining section $t_{i}$ of $D_{i}$, 
we can take a smooth hermitian metric $b_i$ on $D_i$ such that 
\begin{align*}
e^{\psi}=|s|_{b}^2:=
|t_{1}|_{b_{1}}^{2a_{1}}|t_{2}|_{b_{2}}^{2a_{2}}\cdots|t_{N}|_{b_{N}}^{2a_{N}}
~~~\text{and}~~~
-\gamma=\Theta_{b}(D):=\sum_{i=1}^{N}a_{i}\Theta_{b_{i}}(D_{i}). 
\end{align*}
For a positive number $0<c \ll 1$, 
we define the continuous functions $\sigma$ and $\eta$ on $X$ by 
\begin{align*}
\sigma= \sigma_{c}:= \log(|s|_{b}^2 + c)~~~\text{and}~~~
\eta=\eta_{c}
:=\frac{1}{c}-\chi(\sigma), 
\end{align*}
where $\chi(t):=t - \log(-t)$. 

\begin{remark}\label{rem-sigma}{\rm 
(i) We may assume that $|s|_{b}^2< 1/5$ by subtracting a positive constant 
from $\psi$. 
Further we may assume that $\sigma < \log (1/5)$ and 
$1<\chi'(\sigma)<7/4$ 
by choosing a sufficiently small $c>0$.  
\vspace{0.1cm}\\
(ii) 
Further, the function $\eta$ is a continuous function on $X$ with
$\eta > 1/c$. The function $\eta$  is smooth on $X \setminus D$, 
but it need not be smooth on $X$ since $|s|_{b}^2$ is not smooth 
in the case when $0 < a_i < 1$ for some $i$. }
\end{remark}

Throughout the proof, we fix a K\"ahler form $\omega$ on $X$. 
The following proposition gives 
a suitable approximation of a singular hermitian metric $h$ on $E$, 
which enables us to use the twisted Bochner-Kodaira-Nakano identity. 
The proof is based on the argument in \cite{Ohs04}, \cite{Fuj13} 
and  the equisingular approximation theorem in \cite[Theorem 2.3]{DPS01}.  

\begin{proposition}\label{appro}
There exist singular hermitian metrics $\{h_{\varepsilon}\}_{0<\varepsilon\ll 1}$ on $E$ 
with the following properties\,$:$
\begin{itemize}
\item[(a)] $h_{\varepsilon}$ is smooth on $X \setminus Z_{\varepsilon}$, 
where $Z_{\varepsilon}$ is a proper subvariety on $X$. 
\item[(b)] $h_{\varepsilon'} \leq h_{\varepsilon''} \leq h$ holds on $X$ 
for $\varepsilon' > \varepsilon'' > 0$.
\item[(c)] $\mathcal{I}(h)= \mathcal{I}(h_{\varepsilon})$ and $\mathcal{I}(he^{-\psi})= \mathcal{I}(h_{\varepsilon}e^{-\psi})$ on $X$.
\item[(d)] $\eta (\Theta_{h_{\varepsilon}}(E)+\gamma) -  i\ddbar \eta 
- \eta^{-2} i \partial \eta \wedge \dbar \eta \geq -\varepsilon\omega$ on $X \setminus D$. 
\item[(e)] For arbitrary $t>0$, by taking a sufficiently small $\varepsilon>0$, we have 
$$
\int e^{-t \phi} - e^{-t \phi_{\varepsilon}} < \infty, 
$$
where $\phi$ $($resp. $\phi_{\varepsilon}$$)$ is a local weight of $h$ $($resp. $h_{\varepsilon}$$)$. 
\end{itemize}
\end{proposition}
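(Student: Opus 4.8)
The plan is to construct the metrics $h_\varepsilon$ in two stages: first apply the Demailly--Peternell--Schneider equisingular approximation theorem \cite[Theorem 2.3]{DPS01} to the original singular metric $h$, and then correct the resulting curvature lower bound using the auxiliary function $\eta=\eta_c$ so as to absorb the term coming from $i\ddbar\psi = 2\pi[D]+\gamma$. More precisely, I would start from the hypothesis that $\Theta_{E,h}+(1+\alpha\delta)i\ddbar\psi\ge 0$ for all $\alpha\in[0,1]$; evaluated on the absolutely continuous parts away from $D$ this reads $\Theta_{E,h}+(1+\alpha\delta)\gamma\ge 0$ on $X\setminus D$ (in the current sense, with the divisorial part $2\pi[D]$ being harmless since it is positive). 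Applying \cite[Theorem 2.3]{DPS01} to $h$ with respect to the Kähler form $\omega$ produces a decreasing family $h_\varepsilon\uparrow h$, smooth outside an analytic set $Z_\varepsilon$, with $\mathcal{I}(h_\varepsilon)=\mathcal{I}(h)$, satisfying the curvature estimate $\Theta_{E,h_\varepsilon}\ge \Theta_{E,h}-\varepsilon\omega$ in the sense of currents (and similarly controlling $\mathcal{I}(h_\varepsilon e^{-t\psi})$ for the relevant values of $t$, which is where (c) and (e) come from, using that $\psi$ has analytic singularities). This immediately gives (a), (b), (c), and (e) with only bookkeeping.

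The heart of the matter is (d). Here I would compute $-i\ddbar\eta$ and $-\eta^{-2}i\partial\eta\wedge\dbar\eta$ explicitly in terms of $\sigma=\log(|s|_b^2+c)$. Since $\eta = \tfrac1c - \chi(\sigma)$ with $\chi(t)=t-\log(-t)$, we have $d\eta = -\chi'(\sigma)\,d\sigma$ and $i\ddbar\eta = -\chi'(\sigma)\,i\ddbar\sigma - \chi''(\sigma)\,i\partial\sigma\wedge\dbar\sigma$, and one checks $\chi''(t)=1/t^2=(\chi'(t)-1)^2/1$ — more usefully, $\chi''=1/t^2$ and $\chi'-1 = -1/t$, so $\chi''=(\chi'-1)^2$. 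The key identity is the one that makes the Donnelly--Fefferman trick work: the combination $-i\ddbar\eta - \eta^{-2}i\partial\eta\wedge\dbar\eta$ should be bounded below by a multiple of $i\ddbar\sigma$ plus a positive $(1,1)$-form, using that $i\partial\sigma\wedge\dbar\sigma$ has a favorable sign. Meanwhile $i\ddbar\sigma = i\ddbar\log(|s|_b^2+c)$ is computed from the Poincaré--Lelong type formula: it equals $\frac{|s|_b^2}{|s|_b^2+c}(i\ddbar\log|s|_b^2) + \frac{c}{(|s|_b^2+c)^2}\cdot(\text{positive term involving }D^{1,0}s)$, and $i\ddbar\log|s|_b^2 = 2\pi[D] - \gamma$ away from $D$ translates (on $X\setminus D$, where things are smooth) into $i\ddbar\sigma \ge -\frac{|s|_b^2}{|s|_b^2+c}\gamma$. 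Putting these together, $\eta(\Theta_{E,h_\varepsilon}+\gamma) - i\ddbar\eta - \eta^{-2}i\partial\eta\wedge\dbar\eta$ should reduce, after the algebra, to $\eta\,\Theta_{E,h_\varepsilon} + (\text{something})\gamma + (\text{positive})$, where the coefficient of $\gamma$ is arranged — via the precise choice of $\chi$ and the bounds $1<\chi'(\sigma)<7/4$ from Remark~\ref{rem-sigma} — to be of the form $\eta(1+\delta\eta^{-1}\chi'(\sigma))$ or similar, so that $\Theta_{E,h_\varepsilon}$ combines with it to match the hypothesis $\Theta_{E,h}+(1+\alpha\delta)\gamma\ge 0$ for a suitable $\alpha=\alpha(\sigma)\in[0,1]$. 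The $-\varepsilon\omega$ on the right comes from the DPS approximation loss $\Theta_{E,h_\varepsilon}\ge\Theta_{E,h}-\varepsilon\omega$, rescaled by the bounded factor $\eta$ — so strictly one gets $-\varepsilon'\omega$ and renames $\varepsilon$.

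I expect the main obstacle to be the curvature bookkeeping in (d): verifying that the coefficient multiplying $\gamma$ after expanding $-i\ddbar\eta-\eta^{-2}i\partial\eta\wedge\dbar\eta$ genuinely lands in the admissible range dictated by the hypothesis $\Theta_{E,h}+(1+\alpha\delta)i\ddbar\psi\ge0$ for $\alpha\in[0,1]$, uniformly as $c\to 0$. This is delicate because $\eta\sim 1/c\to\infty$ while the various correction terms have different rates in $c$, and one must use the specific form $\chi(t)=t-\log(-t)$ — in particular the miraculous relation $\chi''=(\chi'-1)^2$ — to get the $-\eta^{-2}i\partial\eta\wedge\dbar\eta$ term to exactly cancel the bad contribution of $\chi''(\sigma)i\partial\sigma\wedge\dbar\sigma$ coming from $-i\ddbar\eta$. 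The remaining points are comparatively routine: (a) and the analytic-set statement follow from DPS; (b) is the monotonicity built into the DPS construction; (c) and (e) follow from the equisingularity of the approximation together with the analytic-singularities hypothesis on $\psi$ (so that $\mathcal{I}(h_\varepsilon e^{-\psi})$ stabilizes), and one should be a little careful in (e) to phrase the integrability of $e^{-t\phi}-e^{-t\phi_\varepsilon}$ in terms of the monotone convergence $\phi_\varepsilon\uparrow\phi$ and Dini/monotone-convergence type control.
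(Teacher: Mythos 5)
Your overall strategy matches the paper's: apply the Demailly--Peternell--Schneider equisingular approximation, then verify property (d) by the explicit $\eta$-computation, using (as you note) the specific relation $\chi''=(\chi'-1)^2$, which is what makes $\chi''(\sigma)/\chi'(\sigma)^2\ge 1/\eta^2$ hold and the $-\eta^{-2}i\partial\eta\wedge\dbar\eta$ term absorb the bad Hessian contribution. Two points, however, need to be fixed.

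First, a lighter imprecision in the DPS step. \cite[Theorem 2.3]{DPS01} does not give $\Theta_{E,h_\varepsilon}\ge\Theta_{E,h}-\varepsilon\omega$ as a current inequality (the current $\Theta_{E,h}$ on the right is singular, and $h_\varepsilon$ is less singular than $h$, so no such comparison holds); what it gives, for any prescribed \emph{continuous} $(1,1)$-form $\gamma_0$ with $\Theta_{E,h}\ge\gamma_0$, is a family $h_\varepsilon$ with $\Theta_{E,h_\varepsilon}\ge\gamma_0-\varepsilon\omega$. So the continuous lower bound one feeds into DPS must already be chosen to anticipate (d). Concretely one sets $\alpha(x):=\frac{\chi'(\sigma)\,|s|^2_b}{\delta\,\eta\,(|s|^2_b+c)}\in[0,1]$ (this is where $1<\chi'(\sigma)<7/4$, $\eta>1/c$ and $7c/4\le\delta$ enter), reads off $\Theta_{E,h}+(1+\alpha(x)\delta)\gamma\ge 0$ from the hypothesis, and applies DPS with $\gamma_0=-(1+\alpha(x)\delta)\gamma$. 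Your ``two-stage'' description works out in the end, but the order of operations is really: fix the form, then approximate.

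The more substantial gap is in (c). The first equality $\mathcal{I}(h_\varepsilon)=\mathcal{I}(h)$ is indeed part of the DPS output, but the second, $\mathcal{I}(h_\varepsilon e^{-\psi})=\mathcal{I}(he^{-\psi})$, is not ``only bookkeeping,'' and attributing it to ``the analytic-singularities hypothesis on $\psi$'' is not what makes it work. In fact the paper proves a separate statement (Lemma~\ref{mul}), valid for an \emph{arbitrary} quasi-psh $\varphi$ and with no appeal to the singularity type of $\psi$: the inclusion $\mathcal{I}(he^{-\varphi})\subset\mathcal{I}(h_\varepsilon e^{-\varphi})$ is trivial from $h_\varepsilon\le h$, while the reverse inclusion is proved by H\"older's inequality together with the strong openness theorem (Guan--Zhou; also Lempert, Hi\^ep) and property~(e): for $g$ with $|g|^2 e^{-\varphi-\phi_\varepsilon}\in L^1$, strong openness gives $|g|^{2p}e^{-p(\varphi+\phi_\varepsilon)}\in L^1$ for some $p>1$ close to $1$, and $\int e^{-q(\phi-\phi_\varepsilon)}$ is controlled via property~(e) for the conjugate exponent $q$. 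Without invoking strong openness this step does not go through, so your proof as written has a genuine hole at (c); it should cite strong openness explicitly and carry out the H\"older estimate, rather than point to $\psi$'s analytic singularities.
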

\begin{proof}
We fix a a sufficiently small $c$ with $7c/4 \leq \delta$. 
Then, by Remark \ref{rem-sigma}, 
we can easily check that 
$$
\frac{ \chi'(\sigma) |s|^2_{b}}{\eta (|s|^2_{b} +c)} \leq 
\frac{7 }{4\eta} \leq  \frac{7}{4}c \leq \delta. 
$$
In particular, it follows that 
$$
\Theta_{h} + 
\Big( 1+ \frac{ \chi'(\sigma) |s|^2_{b}}{\eta (|s|^2_{b} +c)} \Big) \gamma \geq 0
\quad
\text{ on } X
$$
since $\psi$ has divisorial singularities and satisfies the
assumptions in Theorem \ref{main-theorem}.  By applying the
equisingular approximation theorem (\cite[Theorem 2.3]{DPS01}) to $h$,
we can take singular hermitian metrics
$\{h_{\varepsilon}\}_{0<\varepsilon\ll 1}$ on $E$ satisfying
properties (a), (b), (e), the former conclusion of~(c), and the
following curvature property:
$$
\Theta_{h_{\varepsilon}} + 
\Big( 1+ \frac{\chi'(\sigma) |s|^2_{b}}{\eta (|s|^2_{b} +c)} \Big) \gamma \geq - \varepsilon\omega\quad
\text{ on } X. 
$$

Now we check property~(d) from the above curvature property. 
The function $\eta$ may not be smooth on $X$, 
but it is smooth on $X \setminus D$. 
Therefore the same computation as in \cite{Ohs04} and \cite{Fuj13} works on $X \setminus D$. 
In particular, from a complicated but straightforward computation, 
we obtain 
\begin{align*}
-  i\ddbar \eta = 
- \frac{\chi'(\sigma) |s|^2_{b}}{|s|^2_{b} + c} \Theta_{b}(D)
+ \Big( \frac{c}{ \chi'(\sigma) |s|^2_{b}} + \frac{\chi''(\sigma)}{\chi'(\sigma)^2} 
\Big) i \partial \eta \wedge \dbar \eta 
\end{align*}
on $X \setminus D$ 
(see \cite{Fuj13} for the precise computation).  
Then, by $- \gamma = \Theta_{b}(D)$ on $X \setminus D$, 
we can see that 
\begin{align*}
&\eta (\Theta_{h_{\varepsilon}}(E)+\gamma) -  i\ddbar \eta 
- \frac{1}{\eta^{2}} i \partial \eta \wedge \dbar \eta \\
=& \Big( \frac{c}{ \chi'(\sigma) |s|^2_{b}} + \frac{\chi''(\sigma)}{\chi'(\sigma)^2} 
- \frac{1}{\eta^{2}}  \Big) i \partial \eta \wedge \dbar \eta  
+
\eta \Big( \Theta_{h_{\varepsilon}}(E)+
(1 + \frac{\chi'(\sigma) |s|^2_{b}}{\eta(|s|^2_{b} + c)} )  \gamma \Big) \\
\geq &\Big( \frac{c}{ \chi'(\sigma) |s|^2_{b}} + \frac{\chi''(\sigma)}{\chi'(\sigma)^2} 
- \frac{1}{\eta^{2}}  \Big) i \partial \eta \wedge \dbar \eta  
- \varepsilon\eta  \omega 
\end{align*}
on $X \setminus D$. 
A straightforward computation yields that 
$ \chi''(\sigma) / \chi'(\sigma)^2 \geq 1/ \eta^{2}$, 
and thus 
the first term is semi-positive. Since $\eta$ is bounded above,
we infer that property~(d) holds.

Finally we check the last conclusion of property~(c) by proving the following lemma, which can be obtained from the strong openness theorem 
(see \cite{GZ15}, \cite{Lem14}, \cite{Hie14}) and property~(e).

\begin{lemma}\label{mul}
For a quasi-psh function $\varphi$, we have 
$\mathcal{I}(he^{-\varphi})=\mathcal{I}(h_{\varepsilon}e^{-\varphi})$. 
In particular, we obtain the last conclusion of property~(c). 
\end{lemma}
\begin{proof}
We have the inclusion $\mathcal{I}(he^{-\varphi}) \subset \mathcal{I}(h_{\varepsilon}e^{-\varphi})$ by $h_{\varepsilon} \leq h$. 
To get the converse inclusion, 
we consider a local holomorphic function $g$ such that $|g|^2e^{-\varphi-\phi_{\varepsilon} }$ is integrable, 
where $\phi_{\varepsilon}$ (resp.\ $\phi$) is a local weight of $h_{\varepsilon}$ (resp.\ $h$). 
Then H$\rm{\ddot{o}}$lder's inequality yields 
\begin{align*}
\int |g|^2e^{-\phi-\varphi}
&= \int |g|^2 e^{-\varphi -\phi_{\varepsilon}}e^{-\phi+\phi_{\varepsilon}} \\
&\leq \Big( \int |g|^{2p} e^{-p(\varphi +  \phi_{\varepsilon}) }\Big)^{1/p} \cdot 
\Big( \int e^{-q(\phi-  \phi_{\varepsilon}) }\Big)^{1/q}, 
\end{align*}
where $p$, $q$ are real numbers such that 
$1/p+ 1/q=1$ and $p>1$.  
By the strong openness theorem, 
the function $|g|^{2p} e^{-p(\varphi +  \phi_{\varepsilon})}$ is integrable 
when $p$ is sufficiently close to one. 
On the other hand, we have 
$$
\int e^{-q(\phi -  \phi_{\varepsilon}) }-  1 
=\int e^{q\phi_{\varepsilon}}\big( e^{-q\phi} - e^{-q\phi_{\varepsilon}} \big)
\leq \sup e^{q\phi_{\varepsilon}} \int \big( e^{-q\phi} - e^{-q\phi_{\varepsilon}} \big).  
$$
The right hand side is finite for a sufficiently small $\varepsilon$ by property~(e). 
\end{proof}
This concludes the proof of Proposition \ref{appro}. 
\end{proof}

From now on, we proceed to prove Theorem \ref{main-theorem} by using
Proposition \ref{appro}. In the same way as in \cite[Section 5]{FM16}, 
one constructs a family of complete K\"ahler forms 
$\{\omega_{\varepsilon,\delta}\}_{0< \delta\ll 1 }$
on $Y_{\varepsilon}:=X \setminus (Z_{\varepsilon} \cup D)$ with the following properties\,: 
\begin{itemize}
\item[(A)] $\omega_{\varepsilon, \delta}$ is a complete K\"ahler form on 
$Y_{\varepsilon}:=X \setminus (Z_{\varepsilon} \cup D)$ for every $\delta>0$.
\item[(B)] $\omega_{\varepsilon, \delta} \geq \omega $ on $Y_{\varepsilon}$ 
for every  $\delta \geq 0$. 
\item[(C)] For every point $p$ in $X$, there exists a bounded function $\Psi_{\varepsilon,\delta}$ 
on an open neighborhood $B_p$ such that 
$\omega_
{\varepsilon, \delta} =  i\ddbar \Psi_{\varepsilon,\delta} $ on $B_{p}$ and 
$\Psi_{\varepsilon,\delta}$ converges uniformly to a bounded function
that is independent of $\varepsilon$. 
\end{itemize} 

For simplicity, we put $H:=he^{-\psi} \text{ and } H_{\varepsilon}:= h_{\varepsilon}e^{-\psi}$. 
We consider a cohomology class $\beta \in H^{q}(X,K_{X} \otimes E \otimes \mathcal{I}( H))$ 
such that $\beta =0\in H^{q}(X,K_{X} \otimes E \otimes \mathcal{I}(h))$. 
By the  De Rham-Weil isomorphism 
\begin{equation*}
H^{q}(X, K_{X}\otimes E \otimes \mathcal{I}(H))
\cong 
\frac{{\rm{Ker}}\,\dbar: L^{n,q}_{(2)}(E)_{H,\omega}
\to L^{n,q+1}_{(2)}(E)_{H,\omega}}
{{\rm{Im}}\,\dbar: L^{n,q-1}_{(2)}(E)_{H,\omega}
\to L^{n,q}_{(2)}(E)_{H,\omega}}, 
\end{equation*}
the cohomology class $\beta$ can be represented 
by a $\dbar$-closed $E$-valued $(n,q)$-form $u$ with $\|u\|_{H,\omega}<\infty$
(that is, $\beta=\{u\}$). 
Here $L^{n,\bullet}_{(2)}(E)_{H,\omega}$ is 
the $L^2$-space of $E$-valued $(n,\bullet)$-forms on $X$ 
with respect to the $L^2$-norm $\|\bullet \|_{H, \omega}$ defined by 
$$
\|\bullet \|^2_{H, \omega}:= \int_{X} |\bullet |^2_{H, \omega}\, dV_{\omega}, 
$$
where $dV_{\omega}:=\omega^n /n!$ and $n:=\dim X$. 
For the $L^2$-norm $\|\bullet\|_{H_{\varepsilon}, \omega_{\varepsilon, \delta}}$ defined by 
$$
\|\bullet\|^2_{\varepsilon, \delta}:=\|\bullet\|^2_{H_{\varepsilon}, \omega_{\varepsilon, \delta}}:=
\int_{X} |\bullet |^2_{H_\varepsilon, \omega_{\varepsilon,\delta}}\, dV_{\omega_{\varepsilon,\delta}},   
$$
one can easily check that
\begin{align}\label{a1}
\|u\|_{\varepsilon, \delta} \leq \|u\|_{H, \omega_{\varepsilon, \delta}} \leq \|u\|_{H, \omega} <\infty.  
\end{align}
Indeed, the first inequality is obtained from property $(b)$,   
and the second inequality is obtained from 
property $(B)$ for $\omega_{\varepsilon, \delta}$ (for example see \cite[Lemma 2,4]{FM16}). 
In particular, we see that $u$ belongs to the $L^{2}$-space 
$$
L^{n,q}_{(2)}(E)_{\varepsilon, \delta}:= L^{n,q}_{(2)}(Y_{\varepsilon},E)_{H_{\varepsilon},\omega_{\varepsilon, \delta}}
$$
of $E$-valued $(n,q)$-forms on $Y_{\varepsilon}$ (not $X$) 
with respect to $\|\bullet\|_{\varepsilon, \delta}$. 
By the orthogonal decomposition 
(see for example \cite[Proposition 5.8]{Mat16a}) 
$$
L^{n,q}_{(2)}(F)_{\varepsilon, \delta}= 
{\rm{Im}}\, \dbar \,  \oplus 
\mathcal{H}^{n,q}_{\varepsilon, \delta}(F)\, \oplus 
{\rm{Im}}\, \dbar^{*}_{\varepsilon, \delta},  
$$
the $E$-valued form $u$ can be decomposed as follows\,$:$
\begin{align}\label{a2}
u=\dbar w_{\varepsilon, \delta}+u_{\varepsilon, \delta}\quad 
\text{for some } 
w_{\varepsilon, \delta} \in {\rm{Dom}\, \dbar} \subset 
L^{n,q-1}_{(2)}(E)_{\varepsilon,\delta},~~\text{and}~~
u_{\varepsilon, \delta} \in \mathcal{H}^{n,q}_{\varepsilon, \delta}(E).   
\end{align}
Here $\dbar^{*}_{\varepsilon, \delta}$ is (the maximal extension of) 
the formal adjoint of the $\dbar$-operator 
and $\mathcal{H}^{n,q}_{\varepsilon, \delta}(E)$ is the space of 
harmonic forms on $Y_{\varepsilon}$, that is,     
$$
\mathcal{H}^{n,q}_{\varepsilon, \delta}(E):= 
\{ w \in L^{n,q}_{(2)}(E)_{\varepsilon, \delta} \, | \, 
\dbar w=0~~~\text{and}~~~\dbar^{*}_{\varepsilon, \delta}w=0. \}. 
$$

Proposition \ref{goal} (resp. Proposition \ref{sol}) can be proved
by the same method as in \cite[Proposition 5.4, 5.6, 5.7]{FM16} 
(resp. \cite[Proposition 5.9, 5.10]{FM16}), so we omit the proofs here. 

\begin{proposition}\label{goal}
If we have 
$$
\varliminf_{\varepsilon\to 0} \varliminf_{\delta \to 0} 
\|u_{\varepsilon, \delta}\|_{K, h_{\varepsilon}, \omega_{\varepsilon,\delta}} =0, 
$$
for every relatively compact set $K \Subset X \setminus D$, 
then the cohomology class $\beta$ is zero in 
$H^{q}(X,K_{X} \otimes E \otimes \mathcal{I}(H))$. 
Here $\| \bullet \|_{K, h_{\varepsilon}, \omega_{\varepsilon,\delta}}$ denotes the $L^2$-norm on $K$ 
with respect to $h_{\varepsilon}$ $($not $H_{\varepsilon}$$)$ and $\omega_{\varepsilon, \delta}$.
\end{proposition}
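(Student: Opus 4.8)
The plan is to read the hypothesis as the statement that the class $\beta$, described through the orthogonal decomposition \eqref{a2}, is represented by cocycles converging to a coboundary, and then to conclude by the Hausdorff property of Lemma~\ref{hausdorff-lemma}. Concretely, one computes $H^{q}(X,K_{X}\otimes E\otimes\mathcal{I}(H))$ by the De Rham--Weil isomorphism, using the fine resolution of $K_{X}\otimes E\otimes\mathcal{I}(H)$ by the $L^{2}_{\mathrm{loc}}$-Dolbeault complex (equivalently by $\dbar$ acting on $\mathcal{C}^{\infty}\otimes_{\mathcal{O}}\mathcal{I}(H)$, which carries the same Fr\'echet topology). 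Since $\mathcal{I}(h_{\varepsilon}e^{-\psi})=\mathcal{I}(he^{-\psi})=\mathcal{I}(H)$ by property~(c) of Proposition~\ref{appro}, the approximating metrics $H_{\varepsilon}$ all resolve the \emph{same} sheaf. Because $H^{q}(X,K_{X}\otimes E\otimes\mathcal{I}(H))$ is Hausdorff, the space of $\dbar$-coboundaries is closed in the space of $\dbar$-cocycles for this topology; hence it suffices to exhibit, along a suitable diagonal sequence $\varepsilon_{k}\to0$, $\delta_{k}\to0$, global $(n,q-1)$-forms $v_{k}$ that are sections of the resolution and satisfy $\dbar v_{k}\to u$ in $L^{2}_{\mathrm{loc}}$.

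The candidate is $v_{k}:=w_{\varepsilon_{k},\delta_{k}}$, for which \eqref{a2} gives $\dbar v_{k}=u-u_{\varepsilon_{k},\delta_{k}}$, and orthogonality of the decomposition together with \eqref{a1} gives $\|\dbar v_{k}\|_{\varepsilon_{k},\delta_{k}}\le\|u\|_{\varepsilon_{k},\delta_{k}}\le\|u\|_{H,\omega}<\infty$. Two points have to be checked. First, that $w_{\varepsilon_{k},\delta_{k}}$, which a priori lives only on $Y_{\varepsilon_{k}}=X\setminus(Z_{\varepsilon_{k}}\cup D)$ and only lies in $L^{n,q-1}_{(2)}(E)_{H_{\varepsilon_{k}},\omega_{\varepsilon_{k},\delta_{k}}}$, defines a genuine global section over $X$ of the $L^{2}_{\mathrm{loc}}$-resolution of $K_{X}\otimes E\otimes\mathcal{I}(H)$; here properties (a)--(c) of Proposition~\ref{appro} and (A)--(C) of the metrics $\omega_{\varepsilon,\delta}$ intervene. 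The local uniform boundedness of the potentials $\Psi_{\varepsilon,\delta}$ with $\omega_{\varepsilon,\delta}=i\ddbar\Psi_{\varepsilon,\delta}$ (property (C)) is the classical device that converts $L^{2}(\omega_{\varepsilon,\delta})$-information on the open manifold $Y_{\varepsilon}$ into $L^{2}_{\mathrm{loc}}$-information across the analytic set $Z_{\varepsilon}\cup D$, while the coincidence of the multiplier ideals (property (c)) ensures the limiting object sits in the correct sheaf; one then extracts a weakly $L^{2}_{\mathrm{loc}}$-convergent subsequence $w_{\varepsilon_{k},\delta_{k}}\rightharpoonup v$, with $v$ a section of the resolution. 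Second, that $u_{\varepsilon_{k},\delta_{k}}\to0$ in $L^{2}_{\mathrm{loc}}$: away from $D$ this is exactly the hypothesis of the Proposition (recall $h_{\varepsilon}\le h$ and $\omega_{\varepsilon,\delta}\ge\omega$, so the norms appearing there are the natural ones), whereas near $D$ one uses that the weight of $H_{\varepsilon_{k}}$ contains the term $e^{-\psi}$, which is large there, so that the global bound $\|u_{\varepsilon_{k},\delta_{k}}\|_{\varepsilon_{k},\delta_{k}}\le\|u\|_{H,\omega}$ keeps the relevant mass near $D$ uniformly small and makes it vanish in the limit --- this is precisely the argument of \cite[Prop.~5.4, 5.6, 5.7]{FM16}. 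Combining the two points, $\dbar v=u$, so $u$ is a $\dbar$-coboundary in the resolution of $K_{X}\otimes E\otimes\mathcal{I}(H)$ and $\beta=0$.

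The main obstacle is the first point: transferring the Hodge-theoretic data $(w_{\varepsilon,\delta},u_{\varepsilon,\delta})$, produced by $L^{2}$ theory on the \emph{open} manifold $Y_{\varepsilon}$ with the \emph{auxiliary} and degenerate metrics $(H_{\varepsilon},\omega_{\varepsilon,\delta})$, into honest global cochains over $X$ with the correct multiplier-ideal integrability, uniformly as $\varepsilon,\delta\to0$ --- the difficulty being, as in the higher-degree case of the direct proof of Theorem~\ref{main-theorem}, that $L^{2}$ forms cannot be restricted continuously to the subvariety where the metrics degenerate. The conditions (a)--(e) of Proposition~\ref{appro} and (A)--(C) on $\omega_{\varepsilon,\delta}$ are designed exactly to overcome this, and the argument is carried out as in \cite[Section~5]{FM16}. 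A variant avoiding weak limits is to go back to \v{C}ech--Dolbeault cochains, solving $\dbar$-equations for the error term $u-u_{\varepsilon,\delta}$ inductively on the intersections of a Stein covering of $X$ until the error is replaced by a \v{C}ech cocycle of holomorphic sections whose $L^{2}$ norms tend to $0$, and then to conclude again through Lemma~\ref{hausdorff-lemma} --- the mechanism already used in the direct proof.
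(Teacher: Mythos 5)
The paper itself omits the proof, deferring to \cite[Prop.~5.4, 5.6, 5.7]{FM16}, so the comparison has to be against that cited method, and against the requirements of the situation. Your high-level plan --- reduce to showing that $u$ lies in the closure of $\dbar$-coboundaries and then invoke the Hausdorff property of $H^q(X,K_X\otimes E\otimes\mathcal{I}(H))$ --- is correct and is the right skeleton. But the main execution you propose has a genuine gap.

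You take $v_k:=w_{\varepsilon_k,\delta_k}$ from the orthogonal decomposition \eqref{a2} and propose to pass to a weak $L^2_{\rm loc}$ limit. The decomposition gives $\|\dbar w_{\varepsilon,\delta}\|_{\varepsilon,\delta}\le\|u\|_{\varepsilon,\delta}\le\|u\|_{H,\omega}$ by orthogonality and \eqref{a1}, but it gives \emph{no bound on $\|w_{\varepsilon,\delta}\|_{\varepsilon,\delta}$ itself}. The metrics $(H_\varepsilon,\omega_{\varepsilon,\delta})$ are non-uniform and degenerate in $\varepsilon,\delta$, so there is no spectral-gap type estimate converting the bound on $\dbar w_{\varepsilon,\delta}$ into a bound on $w_{\varepsilon,\delta}$; without a uniform $L^2_{\rm loc}$ bound there is nothing to extract a weak limit from, and the whole ``first point'' of your argument is unsupported. (Note that the a priori bounded solutions produced by Proposition~\ref{sol} solve $\dbar v_{\varepsilon,\delta}=u_{\varepsilon,\delta}$ --- not $\dbar v_{\varepsilon,\delta}=u-u_{\varepsilon,\delta}$ --- live only in $L^{n,q-1}_{(2)}(E)_{h_\varepsilon,\omega_{\varepsilon,\delta}}$, and are used for a different purpose in Propositions~\ref{harmonic} and \ref{finish}, so they do not rescue this step.) There is a second, smaller imprecision: the hypothesis controls $u_{\varepsilon,\delta}$ only on $K\Subset X\setminus D$ and in the $h_\varepsilon$-norm; near $D$ one only has the \emph{boundedness} $\|u_{\varepsilon,\delta}\|_{\varepsilon,\delta}\le\|u\|_{H,\omega}$, not smallness, and turning this into convergence to zero of $u_{\varepsilon,\delta}$ as a section of the $L^2_{\rm loc}$-resolution of $\mathcal{I}(H)$ over all of $X$ requires an argument rather than the verbal appeal to ``the weight $e^{-\psi}$ is large there.''

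The \v{C}ech--Dolbeault variant you sketch in the last sentence is the route that actually works and is the one the paper (via \cite{FM16}, and analogously in Step~\ref{Step3} of Section~2) follows: solve local $\dbar$-equations on a finite Stein covering, where H\"ormander's estimate on a fixed bounded pseudoconvex $U_i$ supplies $L^2$ bounds with constants independent of $(\varepsilon,\delta)$; reduce the error to a holomorphic \v{C}ech cocycle with $L^2$ norms tending to zero; and conclude by Lemma~\ref{hausdorff-lemma} together with the density argument on the Stein reduction $S$. This circumvents the unbounded $w_{\varepsilon,\delta}$ entirely and handles the passage across $Z_\varepsilon\cup D$ at the level of multiplier ideals (property~(c) of Proposition~\ref{appro} is used there). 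You should promote that sketch to the main line of the proof and drop the weak-limit-of-$w_{\varepsilon,\delta}$ argument, or else supply the missing uniform bound.
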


\begin{proposition}\label{sol}
There exists $v_{\varepsilon,\delta} \in L^{n,q-1}_{(2)}(E)_{h_{\varepsilon},\omega_{\varepsilon,\delta}}$ 
satisfying the following properties\,$:$ 
\begin{align}\label{a3}
\dbar v_{\varepsilon,\delta}=u_{\varepsilon, \delta}~~~\text{and}~~~
\varlimsup_{\delta \to 0} \| v_{\varepsilon,\delta}\|_{\varepsilon, \delta}~~
\text{is bounded by a constant independent of}~\varepsilon.
\end{align}
\end{proposition}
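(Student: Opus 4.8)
\emph{Plan.} I would mimic \cite[Prop.~5.9--5.10]{FM16}. The form $u_{\varepsilon,\delta}$ of \eqref{a2} is $\dbar$-closed and harmonic for $(H_\varepsilon,\omega_{\varepsilon,\delta})$; the goal is to solve $\dbar v_{\varepsilon,\delta}=u_{\varepsilon,\delta}$ with an estimate uniform in $\delta$ and in $\varepsilon$, by running the twisted $L^2$-machinery of Proposition~\ref{L2-estimate} on the complete K\"ahler manifold $(Y_\varepsilon,\omega_{\varepsilon,\delta})$, using the curvature bound (d) of Proposition~\ref{appro}, and exploiting both the harmonicity and the $\dbar$-exactness of $u_{\varepsilon,\delta}$. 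Concretely, I apply Proposition~\ref{L2-estimate} to the bundle $(E,H_\varepsilon)$ over $(Y_\varepsilon,\omega_{\varepsilon,\delta})$ with the bounded functions $\eta=\eta_c$ and $\lambda:=\eta^2$, the choice $\lambda=\eta^2$ being what makes the term $i\lambda^{-1}\partial\eta\wedge\dbar\eta$ of the curvature operator $B=B^{n,q}_{E,H_\varepsilon,\omega_{\varepsilon,\delta},\eta,\lambda}$ coincide with the term $\eta^{-2}i\partial\eta\wedge\dbar\eta$ of (d). On $Y_\varepsilon\subset X\setminus D$ one has $\Theta_{E,H_\varepsilon}=\Theta_{h_\varepsilon}(E)+i\ddbar\psi=\Theta_{h_\varepsilon}(E)+\gamma$, so the $(1,1)$-form in the bracket defining $B$ is exactly the left-hand side of (d); with $\omega_{\varepsilon,\delta}\ge\omega$ and the positivity of $[\,\cdot\,,\Lambda_{\omega_{\varepsilon,\delta}}]$ on $\Lambda^{n,q}$ this gives $B\ge-q\varepsilon\,I$, so $B+\rho I>0$ for $\rho>q\varepsilon$. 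Since $u_{\varepsilon,\delta}$ is $\dbar$-closed, Proposition~\ref{L2-estimate} produces $f_\rho,w_\rho$ with $\dbar f_\rho+w_\rho=u_{\varepsilon,\delta}$ and
\[
\int_{Y_\varepsilon}(\eta+\lambda)^{-1}|f_\rho|^2_{H_\varepsilon}\,dV_{\omega_{\varepsilon,\delta}}+\frac1\rho\int_{Y_\varepsilon}|w_\rho|^2_{H_\varepsilon}\,dV_{\omega_{\varepsilon,\delta}}\;\le\;M(\rho):=\int_{Y_\varepsilon}\big\langle(B+\rho I)^{-1}u_{\varepsilon,\delta},u_{\varepsilon,\delta}\big\rangle_{H_\varepsilon}\,dV_{\omega_{\varepsilon,\delta}}.
\]

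\emph{Harmonicity and exactness.} The heart of the matter is to bound $M(\rho)$ uniformly in $\varepsilon,\delta$ as $\rho\downarrow q\varepsilon$, which the trivial bound $(\rho-q\varepsilon)^{-1}\|u_{\varepsilon,\delta}\|_{\varepsilon,\delta}^2$ does not give. This is where harmonicity enters: since $\omega_{\varepsilon,\delta}$ is complete, the Bochner--Kodaira--Nakano identity on $Y_\varepsilon$ holds with no boundary term and gives $\langle[\Theta_{E,H_\varepsilon},\Lambda_{\omega_{\varepsilon,\delta}}]u_{\varepsilon,\delta},u_{\varepsilon,\delta}\rangle_{H_\varepsilon,\omega_{\varepsilon,\delta}}\le0$; combined with the semipositivity (up to the $\varepsilon$-error of the approximation) of the curvature forms governing $B$, this forces the component of $u_{\varepsilon,\delta}$ on which $B$ is not bounded below by a fixed positive constant to be small, which is exactly what keeps $M(\rho)$ bounded --- carried out precisely as in \cite[Prop.~5.9--5.10]{FM16}. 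One also needs that $u_{\varepsilon,\delta}$ is genuinely $\dbar$-exact in the larger space $L^{n,q-1}_{(2)}(E)_{h_\varepsilon,\omega_{\varepsilon,\delta}}$: because $\beta=0$ in $H^q(X,K_X\otimes E\otimes\mathcal{I}(h))$ and $\mathcal{I}(h)=\mathcal{I}(h_\varepsilon)$ (property (c)), the De Rham--Weil description of that group yields a potential $g$ with $\dbar g=u$ and $\|g\|_{h,\omega}<\infty$, and properties (b),(B) give $g-w_{\varepsilon,\delta}\in L^{n,q-1}_{(2)}(E)_{h_\varepsilon,\omega_{\varepsilon,\delta}}$ with $u_{\varepsilon,\delta}=\dbar(g-w_{\varepsilon,\delta})$. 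Letting $\rho\downarrow q\varepsilon$, the error $w_\rho$ is small and, being the difference of two $\dbar$-exact forms, is itself $\dbar$-exact; absorbing it --- by one further use of the twisted estimate, or by a weak-limit argument as in \cite{FM16} --- converts $f_\rho$ into an exact solution $v_{\varepsilon,\delta}$ of $\dbar v_{\varepsilon,\delta}=u_{\varepsilon,\delta}$ with $\int_{Y_\varepsilon}(\eta+\lambda)^{-1}|v_{\varepsilon,\delta}|^2_{H_\varepsilon}\,dV_{\omega_{\varepsilon,\delta}}$ bounded uniformly in $\varepsilon$ and $\delta$.

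\emph{Conclusion and main obstacle.} Since $\eta$, hence $\eta+\lambda=\eta+\eta^2$, is bounded above (Remark~\ref{rem-sigma}), the last bound dominates $\|v_{\varepsilon,\delta}\|_{\varepsilon,\delta}^2=\int_{Y_\varepsilon}|v_{\varepsilon,\delta}|^2_{H_\varepsilon}\,dV_{\omega_{\varepsilon,\delta}}$ up to a fixed constant, so $\varlimsup_{\delta\to0}\|v_{\varepsilon,\delta}\|_{\varepsilon,\delta}$ is bounded by a constant independent of $\varepsilon$; moreover $v_{\varepsilon,\delta}\in L^{n,q-1}_{(2)}(E)_{H_\varepsilon,\omega_{\varepsilon,\delta}}\subset L^{n,q-1}_{(2)}(E)_{h_\varepsilon,\omega_{\varepsilon,\delta}}$ because $h_\varepsilon\le H_\varepsilon$. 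The single genuinely hard step --- and the place where essentially all the effort goes --- is the uniform boundedness of $M(\rho)$ via the harmonicity of $u_{\varepsilon,\delta}$; the remainder is bookkeeping with the metric inequalities $h_\varepsilon\le h$, $h_\varepsilon\le H_\varepsilon$, $\omega\le\omega_{\varepsilon,\delta}$, the completeness of $\omega_{\varepsilon,\delta}$, and the boundedness of $\eta$.
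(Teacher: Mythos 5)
Your plan diverges from the paper's, and the divergence exposes a genuine gap. The paper proves Proposition~\ref{sol} ``by the same method as in \cite[Prop.~5.9, 5.10]{FM16}'', which is a purely functional-analytic argument: because $\beta=0$ in $H^q(X,K_X\otimes E\otimes\mathcal{I}(h))$ and $\mathcal{I}(h)=\mathcal{I}(h_\varepsilon)$, the representative $u$ has a $\dbar$-potential $g$ with $\|g\|_{h,\omega}<\infty$, hence $\|g\|_{h_\varepsilon,\omega_{\varepsilon,\delta}}\le\|g\|_{h,\omega}$ uniformly by~(b) and~(B); since $u_{\varepsilon,\delta}=u-\dbar w_{\varepsilon,\delta}$ is also $\dbar$-exact in $L^{n,\bullet}_{(2)}(E)_{h_\varepsilon,\omega_{\varepsilon,\delta}}$, one takes for $v_{\varepsilon,\delta}$ the minimal-norm solution of $\dbar v=u_{\varepsilon,\delta}$ in that space and controls its norm by $\|g\|_{h,\omega}$ through the orthogonal decompositions and the harmonicity of $u_{\varepsilon,\delta}$ with respect to $H_\varepsilon$. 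The twisted $L^2$ estimate of Proposition~\ref{L2-estimate} is \emph{not} used here; it is the engine of Proposition~\ref{harmonic}, not~\ref{sol}. You correctly identify the potential $g$ and the $\dbar$-exactness of $u_{\varepsilon,\delta}$ in the $h_\varepsilon$-space, which is the right starting point, but you then only use it to confirm exactness; you do not extract the bound from it, which is the entire content of the proposition.

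The gap in your twisted-estimate route is the claim that harmonicity keeps $M(\rho)=\int\langle(B+\rho I)^{-1}u_{\varepsilon,\delta},u_{\varepsilon,\delta}\rangle$ bounded as $\rho\downarrow q\varepsilon$. The opposite is the case. The twisted Bochner--Kodaira--Nakano identity applied to the $(H_\varepsilon,\omega_{\varepsilon,\delta})$-harmonic form $u_{\varepsilon,\delta}$ (this is precisely the computation in the proof of Proposition~\ref{harmonic}) gives
$\lla(\eta\Theta_{H_\varepsilon}-i\ddbar\eta)\Lambda u_{\varepsilon,\delta},u_{\varepsilon,\delta}\rra_{\varepsilon,\delta}\le 0$; since $B$ is obtained from this $(1,1)$-form by \emph{subtracting} the nonnegative term $\eta^{-2}i\partial\eta\wedge\dbar\eta$, one even has $\lla Bu_{\varepsilon,\delta},u_{\varepsilon,\delta}\rra_{\varepsilon,\delta}\le 0$. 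In other words, harmonicity concentrates $u_{\varepsilon,\delta}$ in the non-positive spectral range $[-q\varepsilon,0]$ of $B$, which is exactly the range on which $(B+\rho I)^{-1}$ blows up as $\rho\downarrow q\varepsilon$. So harmonicity makes $M(\rho)$ harder to bound, not easier; the only way to get information out of this inequality is the way Proposition~\ref{harmonic} uses it, namely to bound $\|(\dbar|s|_b^2)^*u_{\varepsilon,\delta}\|_{\varepsilon,\delta}$, not $\|u_{\varepsilon,\delta}\|$ itself. A further symptom that the twisted-estimate route is off-track is Remark~\ref{rem-sol}: the solution $v_{\varepsilon,\delta}$ is expected to lie in $L^{n,q-1}_{(2)}(E)_{h_\varepsilon,\omega_{\varepsilon,\delta}}$ but possibly \emph{not} in $L^{n,q-1}_{(2)}(E)_{H_\varepsilon,\omega_{\varepsilon,\delta}}$, whereas applying Proposition~\ref{L2-estimate} with $(E,H_\varepsilon)$ as you propose would produce an $H_\varepsilon$-square-integrable solution, contradicting the remark.
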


\begin{remark}\label{rem-sol}{\rm 
In general, we have 
$$
L^{n,\bullet}_{(2)}(E)_{\varepsilon,\delta} = 
L^{n,\bullet}_{(2)}(E)_{H_{\varepsilon}, \omega_{\varepsilon,\delta}}
\subsetneqq
L^{n,\bullet}_{(2)}(E)_{h_{\varepsilon},\omega_{\varepsilon,\delta}}, 
$$
and thus $v_{\varepsilon,\delta}$ may not be $L^2$-integrable with respect to $H_{\varepsilon}$. }
\end{remark}

For the above solution $v_{\varepsilon,\delta}$ of the $\dbar$-equation, 
by using the density lemma, 
we can take a family of smooth $E$-valued forms $\{ v_{\varepsilon,\delta, k} \}_{k=1}^{\infty}$ 
with the following properties\,$:$ 
\begin{align}\label{a4}
v_{\varepsilon,\delta, k} \to v_{\varepsilon,\delta} \text{\quad and \quad} 
\dbar v_{\varepsilon,\delta, k} \to \dbar v_{\varepsilon,\delta}=u_{\varepsilon,\delta} \text{ in } 
L^{n,\bullet}_{(2)}(E)_{h_{\varepsilon},\omega_{\varepsilon,\delta}}. 
\end{align}
Now we consider the level set 
$X_{c}:=\{x \in X \,|\, -|s|_{b}^{2}< c \} \Subset X \setminus D$ for a negative number $c$. 
The set of the critical values of $|s|_{b}^{2}$ is of Lebesgue measure zero 
from Sard's theorem. 
Hence, for a given relatively compact $K \Subset X \setminus D$, 
we can choose $-1 \ll c < 0$ such that 
$$
K \Subset X_{c}:=\{x \in X \,|\, -|s|_{b}^{2}< c \} \text{\quad and \quad} 
d|s|_{b}^{2}  \not = 0~~\text{at every point in}~\partial X_{c}. 
$$
Then, by \cite[Proposition 2.5, Remark 2.6]{Mat16b} (see also \cite[(1.3.2) Proposition]{FK}), we obtain  
\begin{align}\label{key}
\lla \dbar v_{\varepsilon, \delta, k}, u_{\varepsilon,\delta}
\rra_{X_{d}, h_{\varepsilon}, \omega_{\varepsilon,\delta}}=
\lla v_{\varepsilon, \delta, k}, \dbar^{*}_{h_\varepsilon, \omega_{\varepsilon, \delta}} u_{\varepsilon,\delta} 
\rra_{X_{d}, h_{\varepsilon}, \omega_{\varepsilon,\delta}} - 
\la v_{\varepsilon, \delta, k}, (\dbar |s|_{b}^2 )^{*} u_{\varepsilon,\delta} 
\ra_{\partial X_{d}, h_{\varepsilon}, \omega_{\varepsilon,\delta}}
\end{align}
for almost all $d \in{}]c-a, c+a[$, 
where $a$ is a sufficiently small positive number.  
Here $\dbar^{*}_{h_{\varepsilon}, \omega_{\varepsilon, \delta}}$ is the formal adjoint of the $\dbar$-operator 
in $L^{n,\bullet}_{(2)}(E)_{h_{\varepsilon},\omega_{\varepsilon,\delta}}$ and 
$\la \bullet, \bullet \ra_{\partial X_{d}, h_{\varepsilon}, \omega_{\varepsilon,\delta}}$ 
is the inner product on the boundary $\partial X_{d}$ defined by 
$$
\la a, b \ra_{\partial X_{d}, h_{\varepsilon}, \omega_{\varepsilon,\delta}}
:=\int_{\partial X_{d}} \langle a, b \rangle_{h_{\varepsilon}, \omega_{\varepsilon,\delta}}\, 
dS_{\varepsilon,\delta}, 
$$
for smooth $E$-valued forms $a$, $b$, 
where $dS_{\varepsilon,\delta}$ denotes the volume form on $\partial X_{d}$ 
defined by $dS_{\varepsilon,\delta}:= -* d|s|_{b}^2 / \big| d |s|_{b}^2 \big|_{h_{\varepsilon},\omega_{\varepsilon.\delta}}$ 
and $*$ denotes the Hodge star operator with respect to $\omega_{\varepsilon,\delta}$. 
Note that $dV_{\varepsilon,\delta}=dS_{\varepsilon,\delta} \wedge d|s|_{b}^2$. 
One can easily see that  
$$
\lim_{k \to \infty}\lla \dbar v_{\varepsilon, \delta, k}, u_{\varepsilon,\delta}
\rra_{X_{d}, h_{\varepsilon}, \omega_{\varepsilon,\delta}}
=
\lla \dbar v_{\varepsilon, \delta}, u_{\varepsilon,\delta}
\rra_{X_{d}, h_{\varepsilon}, \omega_{\varepsilon,\delta}}
=
\lla u_{\varepsilon,\delta}, u_{\varepsilon,\delta}
\rra_{X_{d}, h_{\varepsilon}, \omega_{\varepsilon,\delta}}
$$
by $(\ref{a4})$, 
and thus it is sufficient to show that 
the right hand side of equality (\ref{key}) converges to zero. 
For this purpose, 
we first prove the following proposition.

\begin{proposition}\label{harmonic}
$$
\lim_{\varepsilon\to 0} \lim_{\delta \to 0}
\| (\dbar |s|_{b}^2 )^{*} u_{\varepsilon,\delta}  \|_{\varepsilon,\delta}=0. 
$$
\end{proposition}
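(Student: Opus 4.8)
The plan is to feed the harmonic form $u_{\varepsilon,\delta}$ itself into the twisted Bochner--Kodaira--Nakano inequality underlying Proposition~\ref{L2-estimate}, taken on the complete K\"ahler manifold $(Y_\varepsilon,\omega_{\varepsilon,\delta})$ with the twisting functions $\eta$ and $\lambda:=\eta^{2}$ --- precisely the pair for which Proposition~\ref{appro}(d) is designed. Since $\dbar u_{\varepsilon,\delta}=0$ and $\dbar^{*}_{\varepsilon,\delta}u_{\varepsilon,\delta}=0$, the whole left-hand side of that inequality vanishes, so it collapses to
\begin{equation*}
\int_{Y_\varepsilon}\big\langle B\,u_{\varepsilon,\delta},u_{\varepsilon,\delta}\big\rangle_{H_\varepsilon,\omega_{\varepsilon,\delta}}\,dV_{\omega_{\varepsilon,\delta}}\ \le\ 0,\qquad B:=\big[\,\eta\,\Theta_{E,H_\varepsilon}-i\ddbar\eta-\eta^{-2}\,i\partial\eta\wedge\dbar\eta,\ \Lambda_{\omega_{\varepsilon,\delta}}\,\big],
\end{equation*}
where $\Theta_{E,H_\varepsilon}=\Theta_{E,h_\varepsilon}+\gamma$ on $Y_\varepsilon=X\setminus(Z_\varepsilon\cup D)$. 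As $u_{\varepsilon,\delta}$ is not compactly supported, this step needs the usual cut-off/density argument, which is available because $\omega_{\varepsilon,\delta}$ is complete and $\eta,\lambda=\eta^{2}$ are bounded on $X$.

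The next step is to read off from this a lower bound for $B\,u_{\varepsilon,\delta}$ that isolates $(\dbar\eta)^{*}u_{\varepsilon,\delta}$. For that I would reuse verbatim the decomposition of $B$ obtained in the proof of Proposition~\ref{appro}: on $Y_\varepsilon$,
\begin{equation*}
B=\Big(\frac{c}{\chi'(\sigma)|s|_{b}^{2}}+\frac{\chi''(\sigma)}{\chi'(\sigma)^{2}}-\frac{1}{\eta^{2}}\Big)\big[\,i\partial\eta\wedge\dbar\eta,\,\Lambda_{\omega_{\varepsilon,\delta}}\,\big]+\eta\,\Big[\,\Theta_{E,h_\varepsilon}+\Big(1+\frac{\chi'(\sigma)|s|_{b}^{2}}{\eta(|s|_{b}^{2}+c)}\Big)\gamma,\ \Lambda_{\omega_{\varepsilon,\delta}}\,\Big].
\end{equation*}
Applied to the $(n,q)$-form $u_{\varepsilon,\delta}$ one uses two facts: the elementary identity $\langle[\,i\partial\eta\wedge\dbar\eta,\Lambda\,]w,w\rangle=|(\dbar\eta)^{*}w|^{2}$ for $(n,q)$-forms (from the eigenvalue description of $[\,\cdot\,,\Lambda]$), and the curvature estimate $\Theta_{E,h_\varepsilon}+\big(1+\frac{\chi'(\sigma)|s|_{b}^{2}}{\eta(|s|_{b}^{2}+c)}\big)\gamma\ge-\varepsilon\omega$ supplied by the equisingular approximation in Proposition~\ref{appro}, which combined with $\omega\le\omega_{\varepsilon,\delta}$ (property~(B)) and $[\omega_{\varepsilon,\delta},\Lambda_{\omega_{\varepsilon,\delta}}]=q$ gives $\langle[\,\Theta_{E,h_\varepsilon}+(\cdots)\gamma,\Lambda\,]u_{\varepsilon,\delta},u_{\varepsilon,\delta}\rangle\ge-\varepsilon q\,|u_{\varepsilon,\delta}|^{2}$. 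Together these yield, pointwise on $Y_\varepsilon$,
\begin{equation*}
\big\langle B\,u_{\varepsilon,\delta},u_{\varepsilon,\delta}\big\rangle\ \ge\ \Big(\frac{c}{\chi'(\sigma)|s|_{b}^{2}}+\frac{\chi''(\sigma)}{\chi'(\sigma)^{2}}-\frac{1}{\eta^{2}}\Big)\,|(\dbar\eta)^{*}u_{\varepsilon,\delta}|^{2}\ -\ \varepsilon\,q\,\eta\,|u_{\varepsilon,\delta}|^{2}.
\end{equation*}

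Finally I would integrate and compare with the first display. Since $\chi''(\sigma)/\chi'(\sigma)^{2}\ge 1/\eta^{2}$ (established in Proposition~\ref{appro}), $c/(\chi'(\sigma)|s|_{b}^{2})\ge c_{0}>0$ with $c_{0}=20c/7$ (using $|s|_{b}^{2}<1/5$ and $\chi'(\sigma)<7/4$), and $\eta$ is bounded above by a constant $C_\eta$ independent of $\varepsilon,\delta$, one gets
\begin{equation*}
c_{0}\,\big\|(\dbar\eta)^{*}u_{\varepsilon,\delta}\big\|_{\varepsilon,\delta}^{2}\ \le\ \varepsilon\,q\!\int_{Y_\varepsilon}\!\eta\,|u_{\varepsilon,\delta}|^{2}_{H_\varepsilon,\omega_{\varepsilon,\delta}}\,dV_{\omega_{\varepsilon,\delta}}\ \le\ \varepsilon\,q\,C_\eta\,\|u\|_{H,\omega}^{2},
\end{equation*}
the last inequality by $\|u_{\varepsilon,\delta}\|_{\varepsilon,\delta}\le\|u\|_{H,\omega}$ from \eqref{a1}. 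Since $\dbar|s|_{b}^{2}=-\frac{|s|_{b}^{2}+c}{\chi'(\sigma)}\dbar\eta$ with $0<\frac{|s|_{b}^{2}+c}{\chi'(\sigma)}<1$, this gives $\|(\dbar|s|_{b}^{2})^{*}u_{\varepsilon,\delta}\|_{\varepsilon,\delta}\le\|(\dbar\eta)^{*}u_{\varepsilon,\delta}\|_{\varepsilon,\delta}=O(\varepsilon^{1/2})$ uniformly in $\delta$, hence $\lim_{\varepsilon\to0}\lim_{\delta\to0}\|(\dbar|s|_{b}^{2})^{*}u_{\varepsilon,\delta}\|_{\varepsilon,\delta}=0$. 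The only genuinely delicate points are the justification of the twisted identity for the non-compactly-supported harmonic form (via completeness of $\omega_{\varepsilon,\delta}$ and boundedness of $\eta,\lambda$) and keeping the signs consistent in the curvature bookkeeping; the algebra itself is already contained in the proof of Proposition~\ref{appro}.
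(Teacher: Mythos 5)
Your proof is correct and follows essentially the same route as the paper's: both feed the harmonic form $u_{\varepsilon,\delta}$ into the twisted Bochner--Kodaira--Nakano identity, use its vanishing $\dbar$ and $\dbar^{*}_{\varepsilon,\delta}$ to kill the left side and the cross term, and then invoke property~(d) of Proposition~\ref{appro} (in your case re-derived via the decomposition of $B$ from that proof) to isolate $\|(\dbar\eta)^{*}u_{\varepsilon,\delta}\|^{2}$ and bound it by $\varepsilon q\|u\|^{2}_{H,\omega}$. The only difference is cosmetic: you carry the $\lambda=\eta^{2}$ correction term explicitly into $B$ whereas the paper works with the uncorrected identity and passes directly from $\eta\Theta_{H_\varepsilon}-i\ddbar\eta\geq\eta^{-2}i\partial\eta\wedge\dbar\eta-\varepsilon\omega_{\varepsilon,\delta}$, but since $\dbar^{*}_{\varepsilon,\delta}u_{\varepsilon,\delta}=0$ anyway, the two are interchangeable.
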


\begin{proof}[Proof of Proposition \ref{harmonic}]
By property~(d) and property~(B), we have 
\begin{align*}
\eta (\Theta_{h_{\varepsilon}}(E)+\gamma) -  i\ddbar \eta  
&\geq \eta^{-2} i \partial \eta \wedge \dbar \eta-\varepsilon\omega \\
&\geq \eta^{-2} i \partial \eta \wedge \dbar \eta-\varepsilon\omega_{\varepsilon,\delta}. 
\end{align*}
Since $u_{\varepsilon,\delta}$ is harmonic with respect to $H_{\varepsilon}$ and $\omega_{\varepsilon, \delta}$, 
we have $\dbar^*_{\varepsilon,\delta} u_{\varepsilon,\delta}=0$ and $\dbar u_{\varepsilon,\delta}=0$. 
Further we have $\gamma= i\ddbar \psi$ on $X \setminus D$. 
Therefore we obtain  
\begin{align*}
0\geq -\|\sqrt{\eta} D'^* u_{\varepsilon,\delta} \|^2_{\varepsilon,\delta}&= \|\sqrt{\eta} \dbar u_{\varepsilon,\delta} \|^2_{\varepsilon,\delta}+ 
\|\sqrt{\eta} \dbar^*_{\varepsilon,\delta} u_{\varepsilon,\delta} \|^2_{\varepsilon,\delta}
-\|\sqrt{\eta} D'^* u_{\varepsilon,\delta} \|^2_{\varepsilon,\delta}
\\
&=
\lla  \big(\eta \Theta_{H_{\varepsilon}} -  i\ddbar \eta \big) \Lambda u_{\varepsilon,\delta}, 
u_{\varepsilon,\delta}\rra_{\varepsilon,\delta} + 
2 {\rm{Re}}\lla \dbar \eta \wedge \dbar^*_{\varepsilon,\delta} u_{\varepsilon,\delta}, 
u_{\varepsilon,\delta} \rra_{\varepsilon,\delta}\\
&=
\lla  \big(\eta \Theta_{H_{\varepsilon}} -  i\ddbar \eta \big) \Lambda u_{\varepsilon,\delta}, 
u_{\varepsilon,\delta}\rra_{\varepsilon,\delta}\\
&\geq 
\lla  \big(\eta^{-2} i \partial \eta \wedge \dbar \eta) \Lambda u_{\varepsilon,\delta}, 
u_{\varepsilon,\delta}\rra_{\varepsilon,\delta} -\varepsilon q \|u_{\varepsilon, \delta}\|^2_{\varepsilon,\delta}.
\end{align*}
from the twisted Bochner-Kodaira-Nakano identity 
(see \cite[Lemma 2.1]{Ohs04} or \cite[Proposition 2.20. 2.21]{Fuj13}).  
On the other hand, one can easily check that 
\begin{align*}
&\lla  \big(\eta^{-2} i \partial \eta \wedge \dbar \eta) 
\Lambda u_{\varepsilon,\delta}, u_{\varepsilon,\delta}\rra_{\varepsilon,\delta}= 
\|\eta^{-1} (\dbar \eta)^{*}u_{\varepsilon,\delta} \|^2_{\varepsilon, \delta}=
\|\eta^{-1} * \partial \eta* u_{\varepsilon,\delta} \|^2_{\varepsilon, \delta}, \\ 
&\partial \eta=- \chi'(\sigma) \partial \sigma
=- \frac{\chi'(\sigma)}{(|s|_{b}^2 + c)} \partial|s|_{b}^2. 
\end{align*}
By the above arguments, we conclude that 
$$
\varepsilon q \|u_{\varepsilon, \delta}\|^2_{\varepsilon,\delta} \geq 
\| \frac{\chi'(\sigma)}{\eta(|s|_{b}^2 + c)} (\dbar |s|_{b}^2 )^{*} u_{\varepsilon,\delta}
\|^2_{\varepsilon,\delta}. 
$$
It follows that 
the left hand side converges to zero 
from $\|u\|_{H, \omega} \geq \|u_{\varepsilon, \delta}\|_{\varepsilon,\delta}$. 
Further the function $\chi'(\sigma)/\eta(|s|_{b}^2 + c)$ is bounded below 
since we have 
$$
\frac{1}{5}>|s|_{b}^2, \quad  C > \eta,   \quad \chi'(\sigma) > 1
$$ 
for some constant $C$. 
This completes the proof. 
\end{proof}

Finally we prove the following proposition by using Proposition \ref{harmonic}. 

\begin{proposition}\label{finish}
{\rm(i)}
For a relatively compact set $K \Subset X \setminus D$, 
we have 
$$\lim_{\varepsilon\to 0} \lim_{\delta \to 0}\varlimsup_{k \to 0}
\lla v_{\varepsilon, \delta, k}, \dbar^{*}_{h_\varepsilon, \delta} u_{\varepsilon,\delta} 
\rra_{K, h_{\varepsilon}, \omega_{\varepsilon,\delta}}=0.
$$
{\rm(ii)} For almost all $d \in {}]c-a, c+a[$,  
we have 
$$
\varliminf_{\varepsilon\to 0} \varliminf_{\delta \to 0}\varliminf_{k \to 0}
\la v_{\varepsilon, \delta, k}, (\dbar |s|_{b}^2 )^{*} u_{\varepsilon,\delta} 
\ra_{\partial X_{d}, h_{\varepsilon}, \omega_{\varepsilon,\delta}}=0. 
$$

\end{proposition}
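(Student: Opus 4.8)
The plan is to derive both statements from Proposition~\ref{harmonic} together with the boundedness of $\|v_{\varepsilon,\delta}\|$ from Proposition~\ref{sol}, via Cauchy--Schwarz. The one extra ingredient is a comparison of adjoints: since $u_{\varepsilon,\delta}$ is harmonic for $(H_\varepsilon,\omega_{\varepsilon,\delta})$ one has $\dbar^{*}_{H_\varepsilon,\omega_{\varepsilon,\delta}}u_{\varepsilon,\delta}=0$, and the formal adjoint of $\dbar$ for $h_\varepsilon$ differs from the one for $H_\varepsilon=h_\varepsilon e^{-\psi}$ by a zeroth order term $\pm(\dbar\psi)^{*}$ (contraction by $\overline{\partial\psi}$); as $\partial\psi=|s|_{b}^{-2}\,\partial|s|_{b}^{2}$ on $X\setminus D$, this gives $\dbar^{*}_{h_\varepsilon,\omega_{\varepsilon,\delta}}u_{\varepsilon,\delta}=\pm\,|s|_{b}^{-2}\,(\dbar|s|_{b}^{2})^{*}u_{\varepsilon,\delta}$ on $Y_\varepsilon$. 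I will also use that $|\cdot|_{h_\varepsilon}^{2}=|s|_{b}^{2}\,|\cdot|_{H_\varepsilon}^{2}$ with $|s|_{b}^{2}<1/5$ (Remark~\ref{rem-sigma}): the $h_\varepsilon$-norm is everywhere bounded by a constant times the $\varepsilon,\delta$-norm, so by Proposition~\ref{sol} the quantity $\varlimsup_{\delta\to0}\|v_{\varepsilon,\delta}\|_{h_\varepsilon,\omega_{\varepsilon,\delta}}$ is bounded independently of $\varepsilon$; and on a region where $|s|_{b}^{2}$ is moreover bounded below (such as $K$, or a thin shell around a level set $\partial X_{d}$) the two norms are comparable with constants independent of $\varepsilon$.

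For part~(i): on $K\Subset X\setminus D$ the factor $|s|_{b}^{-2}$ is bounded, so by the above identity and Proposition~\ref{harmonic},
$$
\|\dbar^{*}_{h_\varepsilon,\delta}u_{\varepsilon,\delta}\|_{K,h_\varepsilon,\omega_{\varepsilon,\delta}}\le C_{K}\,\|(\dbar|s|_{b}^{2})^{*}u_{\varepsilon,\delta}\|_{\varepsilon,\delta}\ \longrightarrow\ 0\qquad(\varepsilon,\delta\to0).
$$
By Cauchy--Schwarz, $\bigl|\lla v_{\varepsilon,\delta,k},\dbar^{*}_{h_\varepsilon,\delta}u_{\varepsilon,\delta}\rra_{K,h_\varepsilon,\omega_{\varepsilon,\delta}}\bigr|\le\|v_{\varepsilon,\delta,k}\|_{K,h_\varepsilon,\omega_{\varepsilon,\delta}}\,\|\dbar^{*}_{h_\varepsilon,\delta}u_{\varepsilon,\delta}\|_{K,h_\varepsilon,\omega_{\varepsilon,\delta}}$, and by \eqref{a4} the first factor tends, as $k\to\infty$, to $\|v_{\varepsilon,\delta}\|_{K,h_\varepsilon,\omega_{\varepsilon,\delta}}$, which has $\varlimsup_{\delta\to0}$ bounded independently of $\varepsilon$. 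Letting successively $k\to\infty$, $\delta\to0$, $\varepsilon\to0$ yields~(i).

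For part~(ii): choose a small $a>0$ so that $\Omega:=\{-(c+a)<|s|_{b}^{2}<-(c-a)\}$ is a shell around $\partial X_{c}$ with $\overline\Omega\Subset X\setminus D$. The co-area identity $dV_{\varepsilon,\delta}=dS_{\varepsilon,\delta}\wedge d|s|_{b}^{2}$ gives, for nonnegative measurable $g$, the equality $\int_{\Omega}g\,dV_{\varepsilon,\delta}=\int_{d}\bigl(\int_{\partial X_{d}}g\,dS_{\varepsilon,\delta}\bigr)|dd|$. Applied with $g=|v_{\varepsilon,\delta,k}|^{2}_{h_\varepsilon,\omega_{\varepsilon,\delta}}$, and passing to $k\to\infty$ with \eqref{a4} and Fatou, the slice norms $A_{\varepsilon,\delta}(d):=\varliminf_{k}\|v_{\varepsilon,\delta,k}\|_{\partial X_{d},h_\varepsilon,\omega_{\varepsilon,\delta}}$ satisfy $\int_{d}A_{\varepsilon,\delta}(d)^{2}|dd|\le\|v_{\varepsilon,\delta}\|^{2}_{\Omega,h_\varepsilon,\omega_{\varepsilon,\delta}}=:N_{\varepsilon,\delta}^{2}$, with $\varlimsup_{\delta\to0}N_{\varepsilon,\delta}$ bounded independently of $\varepsilon$; applied with $g=|(\dbar|s|_{b}^{2})^{*}u_{\varepsilon,\delta}|^{2}_{h_\varepsilon,\omega_{\varepsilon,\delta}}$, the slice norms $B_{\varepsilon,\delta}(d):=\|(\dbar|s|_{b}^{2})^{*}u_{\varepsilon,\delta}\|_{\partial X_{d},h_\varepsilon,\omega_{\varepsilon,\delta}}$ satisfy $\int_{d}B_{\varepsilon,\delta}(d)^{2}|dd|=\|(\dbar|s|_{b}^{2})^{*}u_{\varepsilon,\delta}\|^{2}_{\Omega,h_\varepsilon,\omega_{\varepsilon,\delta}}=:P_{\varepsilon,\delta}^{2}$ with $\lim_{\varepsilon\to0}\lim_{\delta\to0}P_{\varepsilon,\delta}=0$ by Proposition~\ref{harmonic}. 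By Cauchy--Schwarz on $\partial X_{d}$ and then in $L^{2}(|dd|)$,
$$
\int_{d}\varliminf_{k}\bigl|\la v_{\varepsilon,\delta,k},(\dbar|s|_{b}^{2})^{*}u_{\varepsilon,\delta}\ra_{\partial X_{d},h_\varepsilon,\omega_{\varepsilon,\delta}}\bigr|\,|dd|\ \le\ \int_{d}A_{\varepsilon,\delta}(d)\,B_{\varepsilon,\delta}(d)\,|dd|\ \le\ N_{\varepsilon,\delta}\,P_{\varepsilon,\delta},
$$
and the right-hand side tends to $0$ in the iterated limit $\lim_{\varepsilon\to0}\varlimsup_{\delta\to0}$. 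Two applications of Fatou's lemma (first in $\delta$, then in $\varepsilon$) then force $\varliminf_{\varepsilon\to0}\varliminf_{\delta\to0}A_{\varepsilon,\delta}(d)B_{\varepsilon,\delta}(d)=0$, hence $\varliminf_{\varepsilon\to0}\varliminf_{\delta\to0}\varliminf_{k\to\infty}|\la v_{\varepsilon,\delta,k},(\dbar|s|_{b}^{2})^{*}u_{\varepsilon,\delta}\ra_{\partial X_{d},h_\varepsilon,\omega_{\varepsilon,\delta}}|=0$ for almost every $d\in{}]c-a,c+a[$, which is~(ii).

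The main obstacle is the boundary term~(ii): $L^{2}$-convergence $v_{\varepsilon,\delta,k}\to v_{\varepsilon,\delta}$ does not control the traces on a single hypersurface $\partial X_{d}$, so one cannot work with a fixed $d$ and must integrate over $d$ using $dV_{\varepsilon,\delta}=dS_{\varepsilon,\delta}\wedge d|s|_{b}^{2}$, recovering the pointwise statement for almost every $d$ by a Fatou argument (and, if desired, by extracting a subsequence in $k$ along which $v_{\varepsilon,\delta,k}$ has $L^{2}(\partial X_{d})$ traces for a.e.\ $d$). The point that makes the iterated $\varliminf_{\varepsilon}\varliminf_{\delta}\varliminf_{k}$ go through is to estimate $\int_{d}A_{\varepsilon,\delta}(d)B_{\varepsilon,\delta}(d)|dd|\le N_{\varepsilon,\delta}P_{\varepsilon,\delta}$ as a single product, rather than bounding $A_{\varepsilon,\delta}(d)$ uniformly, which is not available. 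Everything else is a routine combination of Propositions~\ref{harmonic} and~\ref{sol} with Cauchy--Schwarz.
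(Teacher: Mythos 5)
Your proposal is correct and follows the same structure as the paper's proof: derive the key identity $\dbar^{*}_{h_\varepsilon,\omega_{\varepsilon,\delta}}u_{\varepsilon,\delta}=-e^{-\psi}(\dbar|s|_b^2)^{*}u_{\varepsilon,\delta}$ from the harmonicity $\dbar^*_{\varepsilon,\delta}u_{\varepsilon,\delta}=0$ (adjoints differ by a zeroth-order contraction), use the boundedness of $|s|_b^{-2}$ on $K$ to reduce~(i) to Proposition~\ref{harmonic} plus Cauchy--Schwarz, and for~(ii) pass through the co-area decomposition $dV_{\varepsilon,\delta}=dS_{\varepsilon,\delta}\wedge d|s|_b^2$ with Fatou. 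For part~(i) the arguments are essentially identical.

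For part~(ii) there is a small but genuine difference in the way the pieces are combined. The paper applies pointwise Cauchy--Schwarz on each slice $\partial X_d$ and then, by Fubini and Fatou \emph{separately} for the two factors, establishes that $\varliminf_\varepsilon\varliminf_\delta\varliminf_k\la v_{\varepsilon,\delta,k},v_{\varepsilon,\delta,k}\ra_{\partial X_d}$ is finite a.e.\ and that $\varliminf_\varepsilon\varliminf_\delta\la (\dbar|s|_b^2)^*u_{\varepsilon,\delta},(\dbar|s|_b^2)^*u_{\varepsilon,\delta}\ra_{\partial X_d}=0$ a.e.\ before concluding. You instead apply a second Cauchy--Schwarz in the $d$-variable to get the direct estimate $\int_d A_{\varepsilon,\delta}(d)B_{\varepsilon,\delta}(d)\,|dd|\le N_{\varepsilon,\delta}P_{\varepsilon,\delta}$, then a single Fatou on the \emph{product} $A_{\varepsilon,\delta}(d)B_{\varepsilon,\delta}(d)$. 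This sidesteps the need to justify that a finite liminf times a vanishing liminf yields a vanishing liminf of the product --- a step the paper leaves implicit and which, taken literally as a statement about two abstract sequences, is not automatic. Your packaging is thus slightly cleaner and more robust, but it is the same core idea (Cauchy--Schwarz on slices + Fubini + Fatou), and both proofs use Propositions~\ref{harmonic} and~\ref{sol} in exactly the same way. One cosmetic remark: you should be careful to state the norm comparison on $K$ consistently --- you compare the $h_\varepsilon$-norm of $\dbar^*_{h_\varepsilon}u_{\varepsilon,\delta}$ to the $\|\cdot\|_{\varepsilon,\delta}$-norm of $(\dbar|s|_b^2)^*u_{\varepsilon,\delta}$, which picks up a factor $\sup_K|s|_b^{-2}$, exactly as the paper does via $e^{-\psi/2}<C_K$; this is fine.
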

\begin{proof}[Proof of Proposition \ref{finish}]
In general, 
we have the formula $D'_{g}u= G^{-1}\partial(Gu)$ 
for a smooth hermitian metric $g$, 
where $G$ is a local function representing $g$. 
Let $G_{\varepsilon}$ be a local function representing $h_{\varepsilon}$. 
We remark that $G_{\varepsilon}e^{-\psi}$ is a local function representing $H_{\varepsilon}$. 
By the definition of $\dbar^{*}_{\varepsilon,\delta}=\dbar^{*}_{H_\varepsilon, \omega_{\varepsilon, \delta}}$, 
we have 
$$
0=\dbar^{*}_{\varepsilon,\delta} u_{\varepsilon,\delta} =-*D'_{H_{\varepsilon}}*u_{\varepsilon,\delta}
=-*(G_{\varepsilon}e^{-\psi})^{-1}\partial(G_{\varepsilon}e^{-\psi}*u_{\varepsilon,\delta}), 
$$
and thus we obtain 
$$
\dbar^{*}_{h_\varepsilon, \omega_{\varepsilon, \delta}} u_{\varepsilon,\delta}=
-*(G_{\varepsilon})^{-1}\partial(e^{\psi} G_{\varepsilon}e^{-\psi}*u_{\varepsilon,\delta})
=-*\partial e^{\psi} *u_{\varepsilon,\delta} e^{-\psi}
=- (\dbar |s|_{b}^2)^* u_{\varepsilon,\delta} e^{-\psi}. 
$$
Now we have 
\begin{align*}\lim_{k \to \infty}
\big| \lla v_{\varepsilon, \delta, k}, \dbar^{*}_{h_\varepsilon, \omega_{\varepsilon, \delta}} u_{\varepsilon,\delta} 
\rra_{K, h_{\varepsilon}, \omega_{\varepsilon,\delta}} \big| 
&\leq \lim_{k \to \infty} \| v_{\varepsilon, \delta, k}\|_{K, h_{\varepsilon}, \omega_{\varepsilon,\delta}}
\| \dbar^{*}_{h_\varepsilon, \omega_{\varepsilon, \delta}} u_{\varepsilon,\delta} \|_{K, h_{\varepsilon}, \omega_{\varepsilon,\delta}} \\
&= \| v_{\varepsilon, \delta}\|_{K, h_{\varepsilon}, \omega_{\varepsilon,\delta}}
\| \dbar^{*}_{h_\varepsilon, \omega_{\varepsilon, \delta}} u_{\varepsilon,\delta} \|_{K, h_{\varepsilon}, \omega_{\varepsilon,\delta}}. 
\end{align*}
Since 
$\varlimsup_{\delta \to 0} \| v_{\varepsilon, \delta}\|_{K, h_{\varepsilon}, \omega_{\varepsilon,\delta}}$
can be bounded by a constant that is independent of $\varepsilon$, 
it is sufficient to show that
$\lim_{\varepsilon\to 0}\lim_{\delta \to 0}\| \dbar^{*}_{h_\varepsilon, \omega_{\varepsilon, \delta}} u_{\varepsilon,\delta} \|_{K, h_{\varepsilon}, \omega_{\varepsilon,\delta}}=0$. 
We have $ e^{-\psi/2}=1/|s|_{b} < C_{K}$ on $K$ for some constant $C_{K}>0$,
since $K$ is a relatively compact set in $X \setminus D$. 
Hence we see that 
\begin{align*}
\| \dbar^{*}_{h_\varepsilon, \omega_{\varepsilon, \delta}} u_{\varepsilon,\delta} \|_{K, h_{\varepsilon}, \omega_{\varepsilon,\delta}}=
\| - (\dbar |s|_{b}^2)^* u_{\varepsilon,\delta} e^{-\psi} \|_{K, h_{\varepsilon}, \omega_{\varepsilon,\delta}}
\leq 
C_{K}\|  (\dbar |s|_{b}^2)^* u_{\varepsilon,\delta} \|_{K,\varepsilon, \delta}. 
\end{align*}
We obtain the first statement (i)
since the right hand side converges to zero by Proposition \ref{harmonic}. 
\vskip8pt

Now we prove statement (ii). By the Cauchy-Schwarz inequality, we have 
$$
\big| \la v_{\varepsilon, \delta, k}, (\dbar |s|_{b}^2 )^{*} u_{\varepsilon,\delta} 
\ra_{\partial X_{d}, h_{\varepsilon}, \omega_{\varepsilon,\delta}} \big|^{2}
\leq 
\la v_{\varepsilon, \delta, k}, v_{\varepsilon, \delta, k}
\ra_{\partial X_{d}, h_{\varepsilon}, \omega_{\varepsilon,\delta}} 
\la (\dbar |s|_{b}^2 )^{*} u_{\varepsilon,\delta}, (\dbar |s|_{b}^2 )^{*} u_{\varepsilon,\delta}
\ra_{\partial X_{d}, h_{\varepsilon}, \omega_{\varepsilon,\delta}}.  
$$
By Fubini's theorem, we obtain 
$$
\int_{d \in{}]c-a, c+a[} \la v_{\varepsilon, \delta, k}, v_{\varepsilon, \delta, k}
\ra_{\partial X_{d}, h_{\varepsilon}, \omega_{\varepsilon,\delta}} dS_{\varepsilon,\delta}
=
\int_{c-a< -|s|_{b}^2 <c+a} 
| v_{\varepsilon, \delta, k}|^2_{h_{\varepsilon}, \omega_{\varepsilon,\delta}} dV_{\varepsilon,\delta}
\leq \|v_{\varepsilon,\delta}\|^2_{h_{\varepsilon},\delta}. 
$$
By Fatou's lemma, we see that 
$$
\int_{d \in{}]c-a, c+a[} 
\varliminf_{\varepsilon\to 0} \varliminf_{\delta \to 0} \varliminf_{k \to \infty}
\la v_{\varepsilon, \delta, k}, v_{\varepsilon, \delta, k}
\ra_{\partial X_{d}, h_{\varepsilon}, \omega_{\varepsilon,\delta}} dS_{\varepsilon,\delta}
\leq \varliminf_{\varepsilon\to 0} \varliminf_{\delta \to 0} 
\|v_{\varepsilon,\delta}\|^2_{h_{\varepsilon},\delta}< \infty. 
$$
Therefore the integrand of the left hand side is finite 
for almost all $d \in (c-a, c+a)$. 
On the other hand, 
by the same argument, we see that 
$$
\int_{d \in{}]c-a, c+a[} 
\varliminf_{\varepsilon\to 0} \varliminf_{\delta \to 0} 
\la (\dbar |s|_{b}^2 )^{*} u_{\varepsilon,\delta}, (\dbar |s|_{b}^2 )^{*} u_{\varepsilon,\delta}
\ra_{\partial X_{d}, h_{\varepsilon}, \omega_{\varepsilon,\delta}} dS_{\varepsilon,\delta}
\leq \varliminf_{\varepsilon\to 0} \varliminf_{\delta \to 0} 
\| (\dbar |s|_{b}^2 )^{*} u_{\varepsilon,\delta} \|_{h_{\varepsilon}, \omega_{\varepsilon,\delta}}=0. 
$$
Therefore the integrand of the left hand side is zero 
for almost all $d \in (c-a, c+a)$. 
This completes the proof. 
\end{proof}

Theorem \ref{main-theorem} is now a consequence of
Proposition \ref{goal}, Proposition \ref{finish}, and equation (\ref{key}). 
\end{proof}

\begin{remark}\label{inj-holconvex}{\rm 
In the case of a holomorphically convex manifold, a proof based on
injectivity theorems can be obtained by a slight modification of the above
proof. The only problem is that an $E$-valued differential form $u$
representing a given cohomology class is not necessarily \hbox{$L^2$-integrable}
but just locally $L^2$-integrable. Since $X$ admits a holomorphic map
$\pi: X \to S$ to a Stein space~$S$, the form $u$ is $L^{2}$-integrable
with respect to the metric $he^{-\psi}e^{-\Phi}$ for
a suitable psh exhaustion function $\Phi$ on $X$. 
Then it is not hard to check that our arguments still work by replacing
$h$ with~$he^{-\Phi}$.} 
\end{remark}

\begin{remark}{\rm It would be interesting to know whether the hypothesis that
$\psi$ has analytic singularities is really needed. The main statement still
makes sense when $\psi$ has arbitrary analytic singularities, and one may 
thus guess that the result can be extended by performing a 
further regularization of $\psi$.}
\end{remark}

\vskip18pt


\bigskip
\end{document}